\def\mapright#1#2#3{\smash{\mathop{\hbox to
#3{\rightarrowfill}}\limits^{#1}_{#2}}}
\def\mapleft#1#2#3{\smash{\mathop{\hbox to
#3{\leftarrowfill}}\limits^{#1}_{#2}}}
\def\mapright#1#2{\smash{\mathop{\hbox to 0.90cm{\rightarrowfill}}\limits^{#1}_{#2}}}
\def\mapleft#1#2{\smash{\mathop{\hbox to 0.90cm{\leftarrowfill}}\limits^{#1}_{#2}}}
\def\mapleftright#1#2{\smash{\mathop{\hbox to 0.80cm{\leftarrowfill \rightarrowfill}}\limits^{#1}_{#2}}}
\title{PL-embedding the dual of two Jordan curves \\ 
into $\mathbb{S}^ 3$ by an $O(n^ 2)$-algorithm
\footnote{2010 Mathematics Subject Classification: 
57M25 and 57Q15 (primary), 57M27 and 57M15 (secondary)}} 
\author{Sóstenes L. Lins and Ricardo N. Machado}
\date{\today}
\begin{document}

\maketitle

\begin{abstract}
Let be given a {\em colored 3-pseudo-triangulation} $\mathcal{H}^\star$ with $n$ tetrahedra.
Colored means that each tetrahedron have vertices 
distinctively colored 0,1,2,3.
In a {\em pseudo} 3-triangulation the intersection of simplices might be 
subsets of simplices
of smaller dimensions (faces), instead of a single maximal face, 
as for true triangulations. If $\mathcal{H}^\star$ is the dual of a cell
3-complex induced (in an specific way to be made clear) 
by a pair of Jordan curves with $2n$ transversal crossings,
then we show that the induced 3-manifold $|\mathcal{H}^\star|$ is 
$\mathbb{S}^3$ and we make available an $O(n^2$)-algorithm to produce a
PL-embedding (\cite{rourke1982introduction}) of $\mathcal{H}^\star$ into $\mathbb{S}^3$.
This bound is rather surprising because such PL-embeddings are often of exponential size.
This work is the first step towards obtaining, via an $O(n^ 2)$-algorithm,
a framed link presentation inducing the same closed orientable 3-manifold
as the one given by a colored pseudo-triangulation. 
Previous work on this topic appear in
\url{http://arxiv.org/abs/1212.0827}, \cite{linsmachadoA2012}, 
\url{http://arxiv.org/abs/1212.0826}, \cite{linsmachadoB2012} and 
\url{http://arxiv.org/abs/1211.1953}, \cite{linsmachadoC2012}.
However, the exposition and the new proofs of this paper are meant to be entirely self-contained.

\end{abstract}

\section{Introduction}
The subject of this paper is motivated by the following problem:
given a triangulated closed, connected, orientable 3-manifold $\mathbb{R}^3$, 
how to obtain by a polynomial algorithm  a {\em blackboard-framed link presentation}
inducing  $\mathbb{R}^3$. This is one of the most basic open problems in
3-manifold topology: there are two main languages to present a specific 3-manifold.
The triangulation based presentations (triangulations, Heegaard diagrams, gems, special
spines, etc) and the framed link presentations (various types of decorated links which
yield the 3-manifold as a recipe for surgery on $\mathbb{S}^3$). Going from the
second type presentation to a first type one is straightforward by a linear algorithm,
see Section 2 of \url{http://arxiv.org/abs/1305.5590}, \cite{lins2013C}.
However there is no known polynomial algorithm to go from the first to the second.
This paper starts to remedy this situation.

A blackboard-framed link is in 1-1 correspondence with a {\em  blink}, which is
simply a plane graph with an arbitrary bipartition of the edges set into black and gray edges.
For a recent combinatorially oriented 
account about how a blink induces a 3-manifold, see 
\url{http://arxiv.org/abs/1305.4540}, \cite{lins2013B}. 
The blink of the bottom part of Fig. \ref{fig:euclideans} provides a solution for a problem
left open in \url{http://arxiv.org/arXiv:math/0702057}, \cite{lins2007blink}.
Even though the general problem remains unconquered, substantial advances were
possible and, in practice, it permits to find by an $O(n^2)$-algorithm a 
blink presentation inducing the same 3-manifold
as a given gem. From here on we concentrate in producing an embedding into $\mathbb{S}^3$ 
of a $J^2$-gem. This result is used as a central important lemma in finding the 
blink equivalent to a gem, which is the subject of 
subsequent papers currently being polished.

\begin{figure}[H]
\begin{center}
\includegraphics[width=10.5cm]{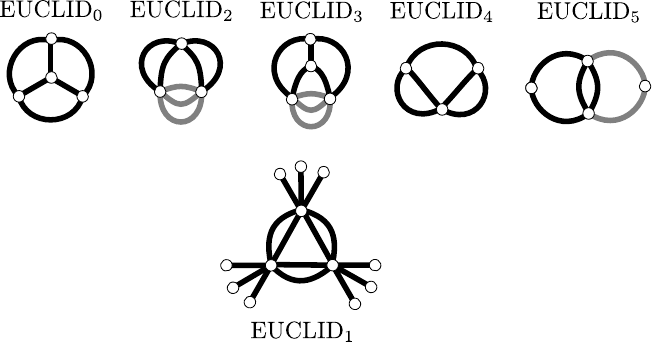} \\
\caption{\sf The discovery of a blink for EUCLID$_1$, solving an
open problem left in page 117 of \cite{lins2007blink}:
of the six euclidean 3-manifolds only EUCLID$_1$ did not have a blink presentation. 
To find this blink it is necessary to develop 
and understanding some deep geometric properties of gems, 
as we start doing in in this paper. The blink presentation for EUCLID$_1$
was obtained in this way.
}
\label{fig:euclideans}
\end{center}
\end{figure}

\subsection{$J^2$-gems}
A {\em $J^ 2$-gem} is a 4-regular, 4-edge-colored planar graph $\mathcal{H}$ obtained from the
intersection pattern of two Jordan curves $X$ and $Y$ with $2n$ transversal
crossings.
These crossings define consecutive segments of $X$ alternatively 
inside $Y$ and outside $Y$. Color the first type 2 and the second type 3.
The crossings also define consecutive segments of $Y$ alternatively 
inside $X$ and outside $X$. Color the first type 0 and the second type 1.
This defines a 4-regular 4-edge-colored graph $\mathcal{H}$ where the vertices are the crossings
and the edges are the colored colored segments, see Fig. \ref{fig:j2gemr245}.

\begin{figure}[H]
\begin{center}
\includegraphics[width=5cm]{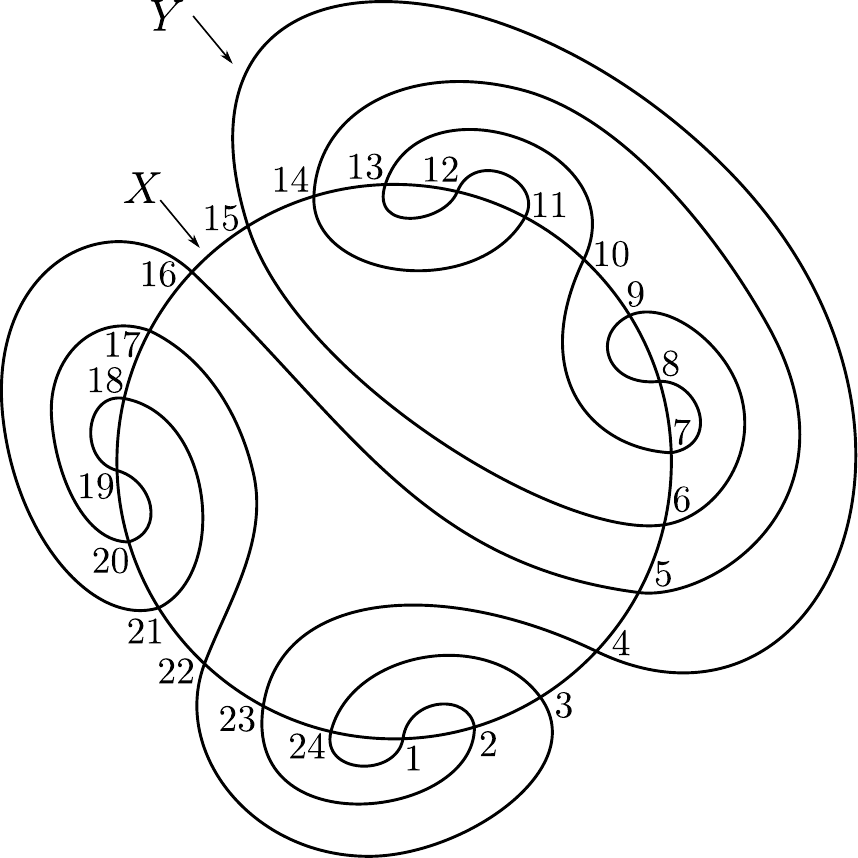} \\
\caption{\sf How to interpret a pattern formed by two $2n$-crossing Jordan curves as a gem:
start labeling the crossings of $X$ in consecutively in a counterclockwise 
way so that edge linking 1 to 2 is of color 2 which alternates with color 3.
Let the segments of $Y$ internal to $X$ be colored 0 and the ones external to $X$
be colored 1. The result is, by definition, a $J^2$-gem.
}
\label{fig:j2gemr245}
\end{center}
\end{figure} 

Let $\mathcal{H}^\star$ be the 3-dimensional abstract 3-complex formed by taking a set of 
vertex colored tetrahedra in 1--1 correspondence with the set of vertices of $\mathcal{H}$,
$V(\mathcal{H})$, so that each tetrahedra
has vertices of colors 0,1,2,3. This vertex coloring induces a face coloring of the 
triangular faces of the tetrahedron: color $i$ the face opposite to the vertex colored $i$.
For each $i$-colored edge of $\mathcal{H}$ with ends $u$ and $v$
paste the corresponding tetrahedra $\nabla_u$ and $\nabla_v$ so as to paste the two triangular
faces that do not contain a vertex of color $i$ in such a way as to
to match vertices of the other three colors. 
We show that the topological space 
$|K|$ induced by $\mathcal{H}^\star$ is $\mathbb{S}^ 3$. Moreover we describe an $O(n^2)$-algorithm
to make available a PL-embedding (\cite{rourke1982introduction}) of $\mathcal{H}^\star$ 
into $\mathbb{S}^3$. We get explicit coordinates in $\mathbb{S}^ 3$ for the 0-simplices and
the p-simplices $(p \in \{1,2,3\}$) become linear simplices in the spherical geometry. For basic
topological definitions we refer to the lucid book by Stillwell \cite{stillwell1993classical}.

\subsection{Gems and their duals}
A {\em (3+1)-graph} \index{(3+1)-graph} $\mathcal{H}$ is a connected regular graph of degree 4 where 
to each vertex there are four incident differently colored edges in the color 
set $\{0,1,2,3\}$.
\index{residue}
For $I \subseteq \{0,1,2,3\}$, an {\em $I$-residue} is a component of the subgraph 
induced by the $I$-colored edges. Denote by $v(\mathcal{H})$ the number of
$0$-residues (vertices) of $\mathcal{H}$. For $0\le i < j\le3$, an $\{i,j\}$-residue is also 
called an {\em $ij$-gon} or an $i$- and $j$-colored {\em bigon} (it is an even polygon, where the edges are 
alternatively colored $i$ and $j$). Denote by $b(\mathcal{H})$ the total number of $ij$-gons 
for $0\le i < j\le3$. Denote by $t(\mathcal{H})$ the total number of 
$\overline{i}$-residues for $0\le i\le 3$, where $\overline{i}$ 
means complement of $\{i\}$ in $\{0,1,2,3\}$.

We briefly recall the definition of gems taken from \cite{lins1995gca}.
A {\em $3$-gem} \index{3-gem} is a $(3+1)$-graph $\mathcal{H}$
satisfying $v(\mathcal{H})+t(\mathcal{H})=b(\mathcal{H})$. This relation is equivalent to having the vertices, 
edges and bigons 
restricted to any $\{i,j,k\}$-residue inducing a plane graph where the 
faces are bounded by the 
bigons. Therefore we can embed each such $\{i,j,k\}$-residue into a sphere 
$\mathbb{S}^2$. We consider
the ball bounded this $\mathbb{S}^2$ as induced by the  $\{i,j,k\}$-residue. For this reason
an $\{i,j,k\}$-residue in a 3-gem, $i<j<k$, is also called a \index{triball} {\em triball}.
An $ij$-gon appears once in the boundary of triball $\{i,j,k\}$ and once in the boundary
of triball $\{i,j,h\}$. By pasting the triballs along disks bounded by all 
the pairs of $ij$-gons, $\{i,j\} \subset \{0,1,2,3\}$ of a gem $\mathcal{H}$,
we obtain a closed 3-manifold denoted by $|\mathcal{H}|$. 
This general construction is dual to the one 
exemplified in the abstract and produces any closed 3-manifold. 
The manifold is orientable if and only if 
$\mathcal{H}$ is bipartite, \cite{lins1985graph}. A {\em crystallization} is a
gem which remains connected after deleting all the edges of any given color, that is,
it has one $\{i,j,k\}$-residue for each trio of colors $\{i,j,k\} \subset \{0,1,2,3\}$.

Let $\mathcal{H}^\star$ be the dual of a gem $\mathcal{H}$.
An $\overline{i}$-residue of $\mathcal{H}$ corresponds in $\mathcal{H}^\star$ to a $0$-simplex of $\mathcal{H}^\star$. Most 0-simplices of $\mathcal{H}^\star$
do not correspond to $\overline{i}$-residues of $\mathcal{H}$.
An $ij$-gon of a gem $\mathcal{H}$ corresponds in $\mathcal{H}^\star$ to a {\em PL1-face} formed by a sequence of
1-simplices of $\mathcal{H}^\star$; this PL1-face is the intersection of two PL2-faces of colors $i$ and $j$; their
two bounding $0$-simplices correspond to an $\overline{h}$- and to a $\overline{k}$-residue, where
$\{h,i,j,k\}=\{0,1,2,3\}$.
An $i$-colored edge of $\mathcal{H}$ corresponds to a {\em PL2-face} 
which is a 2-disk triangulated by a subset of $i$-colored 2-simplices of $\mathcal{H}^\star$. Finally to a vertex of $\mathcal{H}$, 
it corresponds a {\em PL3-face} of $\mathcal{H}^\star$ which is a 3-ball formed by a subset of 3-simplices of $\mathcal{H}^\star$.

\begin{proposition}
\label{prop:j2shere}
The $3$-manifold induced by a $J^2$-gem $\mathcal{H}$ is $\mathbb{S}^3$.
\end{proposition}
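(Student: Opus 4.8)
The plan is to realize $|\mathcal{H}|$ as the union of two $3$-balls glued along their common boundary $2$-sphere, which is $\mathbb{S}^3$. First I would translate the combinatorics of the $J^2$-gem $\mathcal{H}$ into the planar picture of $X\cup Y\subset\mathbb{S}^2$. Since $X$ and $Y$ cross transversally, $X\cup Y$ is a $4$-valent graph embedded in $\mathbb{S}^2$; cutting each of the two disks bounded by $X$ successively along the arcs of $Y$ it contains shows that every complementary region is an open disk. Each such region $R$ lies inside or outside $X$ and inside or outside $Y$, and chasing the colors around its boundary cycle (which alternates an $X$-arc with a $Y$-arc) shows that $\partial R$ is a $\{0,2\}$-gon if $R$ is inside both curves, a $\{0,3\}$-gon if $R$ is inside $X$ and outside $Y$, a $\{1,2\}$-gon if $R$ is outside $X$ and inside $Y$, and a $\{1,3\}$-gon if $R$ is outside both; conversely every bigon of one of these four types bounds such a region. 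The two remaining residues are the curves themselves: the $\{2,3\}$-residue is $X$ and the $\{0,1\}$-residue is $Y$, so there is exactly one bigon of each of these colors. Finally, every triple of colors contains $\{0,1\}$ or $\{2,3\}$, and $X$, $Y$ are connected subgraphs meeting every vertex of $\mathcal{H}$, so each $\{i,j,k\}$-residue is connected; deleting from $\mathbb{S}^2$ the open arcs of the missing color fuses the relevant regions into a single disk, which exhibits each residue as a plane graph with bigon faces, i.e. a genuine $3$-ball. Hence $\mathcal{H}$ is a crystallization with exactly the four triballs $T_{012},T_{013},T_{023},T_{123}$, and $|\mathcal{H}|$ is a well-defined closed $3$-manifold.

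Next I would assemble the manifold from these four balls. The unique $\{0,1\}$-gon occurs once in $\partial T_{012}$ and once in $\partial T_{013}$, so gluing these two $3$-balls along the single disk it bounds produces a $3$-ball $B_1:=T_{012}\cup T_{013}$ whose boundary $2$-sphere is tiled by all the $\{0,2\}$-, $\{1,2\}$-, $\{0,3\}$- and $\{1,3\}$-disks. Symmetrically, the unique $\{2,3\}$-gon yields a $3$-ball $B_2:=T_{023}\cup T_{123}$ whose boundary is tiled by the same collection of disks. The gem-identifications not yet used are precisely those along the $\{0,2\}$-, $\{0,3\}$-, $\{1,2\}$- and $\{1,3\}$-gons, and each of them glues a disk of $\partial B_1$ to the matching disk of $\partial B_2$; together they assemble into a homeomorphism $\partial B_1\to\partial B_2$. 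Thus $|\mathcal{H}|$ is the union of the two $3$-balls $B_1$ and $B_2$ along their boundary $2$-spheres, and by Alexander's trick (any homeomorphism of $\mathbb{S}^2$ extends over the ball) this is $\mathbb{S}^3$.

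The one delicate point --- and the real content of the proposition --- is that $B_1$ and $B_2$ are genuine $3$-balls rather than positive-genus handlebodies. This is exactly where the hypothesis that $X$ and $Y$ are each a single Jordan curve is essential: it forces the $\{0,1\}$- and $\{2,3\}$-residues to be unique, so that each of $B_1$, $B_2$ is obtained by gluing two balls along a single disk in each boundary. For a general crystallization there can be many $\{0,1\}$- or $\{2,3\}$-gons, the analogous construction then produces handlebodies of positive genus, and $|\mathcal{H}|$ may be an arbitrary closed $3$-manifold; everything in the argument besides this uniqueness is just bookkeeping about which bigon lies in which triball.
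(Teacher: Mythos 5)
Your proof is correct, and it takes a genuinely different route from the paper's. The paper argues by counting: summing the Euler relations of the four planar $\{i,j,k\}$-residues gives $v(\mathcal{H})+4=b(\mathcal{H})$, so $\mathcal{H}$ is a crystallization with a unique $01$-gon and a unique $23$-gon; it then quotes the theorem that the fundamental group of the manifold induced by a crystallization needs only $b_{01}-1$ generators, concludes that $\pi_1(|\mathcal{H}|)$ is trivial, and finishes either by invoking the Poincar\'e conjecture or by a minimal-counterexample argument built on the fusion move $\mathcal{H}\, fus\, \{p,q\}$ and the dichotomy $|\mathcal{H}|=|\mathcal{H}'|$ versus $|\mathcal{H}|=|\mathcal{H}'|\,\#\,(\mathbb{S}^2\times\mathbb{S}^1)$. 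You instead read the triballs and bigons directly off the planar picture of $X\cup Y$ and exhibit a genus-zero Heegaard splitting: the unique $01$-gon glues $T_{012}$ to $T_{013}$ along a single disk to give a ball $B_1$, the unique $23$-gon gives a ball $B_2=T_{023}\cup T_{123}$, and the four remaining families of disk identifications assemble into a homeomorphism $\partial B_1\to\partial B_2$, so $|\mathcal{H}|\cong\mathbb{S}^3$ by Alexander's trick. Your version is more elementary and fully self-contained (no fundamental-group theorem, no Poincar\'e, no fusion dichotomy), and its explicit two-ball decomposition is closer in spirit to the embedding the paper goes on to construct; the paper's version, on the other hand, is what delivers the by-product the authors say they need in the sequel (the corollary attached to the fusion argument). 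Both proofs hinge on the same combinatorial fact --- there is exactly one $01$-gon and one $23$-gon because $X$ and $Y$ are each a single Jordan curve --- and your closing paragraph correctly isolates that uniqueness as the essential point.
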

\begin{proof}
Removing from $\mathcal{H}$ all the edges of any given color produces a restricted graph
which is connected (whose faces are bounded by the $ij$-gons). 
So $\mathcal{H}$ has four 3-residues, one of each type. 
Denote by $b_{ij}$ the number of $ij$-gons of $\mathcal{H}$.
Each one of these residues are planar graphs
having $v=2n$ vertices, $3v/2$ edges and $b_{12}+b_{13}+b_{23}$, 
$b_{02}+b_{03}+b_{23}$, $b_{01}+b_{13}+b_{03}$ and  $b_{12}+b_{01}+b_{02}$ faces
for, respectively, the $\overline{0}$-, $\overline{1}$-, $\overline{2}$-, 
$\overline{3}$-residue. Adding the four formulas for the Euler characteristic of the sphere
imply that $v(\mathcal{H})+4=b(\mathcal{H})$. Therefore, $\mathcal{H}$ is a crystallization having one $0i$-gon and
one $jk$-gon. This implies that the fundamental group of the induced manifold is trivial:
as proved in \cite{lins1988fundamental}, the fundamental group of the space induced by a crystallization
is generated by $b_{0i}-1$ generators, and in our case this number is 0.
Since Poincaré Conjecture is now proved, we are done. 
However, we can avoid using this fact and, as a bonus, obtaining the validity
of the next corollary, which is used in the sequel.

Assume that $\mathcal{H}$ is a $J^2$-gem which does not induce $\mathbb{S}^3$ and has the smallest possible
number of vertices satisfying these assumptions. By planarity we must have a pair of edges of
$\mathcal{H}$ having the same ends $\{p,q\}$. Consider the graph $\mathcal{H} fus \{p,q\}$ obtained from
$\mathcal{H}$ by removing the vertices $p$, $q$ and the 2 edges linking them as well as welding the 2
pairs of pendant edges along edges of the same color. In \cite{lins1985simple} S. Lins proves that
if $\mathcal{H}$ is a gem, $\mathcal{H}'=\mathcal{H} fus \{p,q\}$ is also a gem and that two exclusive 
relations hold regarding $|\mathcal{H}|$ and $ |\mathcal{H}'|$, their induced 3-manifolds:
either $|\mathcal{H}| = |\mathcal{H}'|$ in the case that $\{p,q\}$ induces a 2-dipole
or else  $|\mathcal{H}| = |\mathcal{H}'| \# (\mathbb{S}^2\times \mathbb{S}^1)$. Since
$\mathcal{H}'$ is a $J^2$-gem, by our minimality hypothesis on $\mathcal{H}$
the valid alternative is the second. But this
is a contradiction: the fundamental group of $|\mathcal{H}|$ would not be trivial, 
because of the summand $\mathbb{S}^2\times \mathbb{S}^1$. 
\end{proof}

\subsection{Dipoles, pillows and balloons} 

Suppose there are $m$ edges linking vertices $x$ and $y$ of a gem, $m \in \{1,2,3\}$.
We say that $\{u,v\}$ is an {\em $m$-dipole} if removing all edges in the colors of the ones linking $x$ to $y$,
these vertices are in distinct components of the graph induced by the edges in the complementary
set of colors. To {\em cancel the dipole} means deleting the subgraph induced by $\{u,v\}$ and identify pairs 
of the hanging edges along the same remaining color. To {\em create the dipole} is the inverse operation.
It is simple to prove that the manifold of a gem is invariant under dipole cancellation or creation.
Even though is not relevant for the present work state the foundational result on gems: two $3$-manifolds
are homeomorphic if and only if any two gems inducing it are linked by a finite number of
cancellations and creations of dipoles, \cite{ferri1982crystallisation, lins2006blobs}.

The dual of a $2$-dipole $\{u,v\}$, with internal colors $i,j$ is named a {\em pillow}. 
It consists of two PL3-faces $\nabla_u$ and $\nabla_v$ sharing two PL2-faces colored $i$ and $j$.
The {\em thickening of a 2-dipole into a 3-dipole} is defined as follows. Let
$i,j$ be the two colors internal to 2-dipole $\{u,v\}$ and $k$ a third color.
Let $a$ be the $k$-neighbor of $x$ and $b$ be the $k$-neighbor of $y$. Remove
edges $[a,x]$ and $[b,y]$ and put back $k$-edges $[u,v]$ and $[r,s]$. This completes
the thickening. It is simple to prove that the thickening in a gem produces a gem. 
We must be careful because the inverse blind inverse operation {\em thinning a 3-dipole} 
not always produces a gem. The catch is that the result of the thinning perhaps is not a 2-dipole.
In this sense the thinning move is not local: we must make sure that the result is a 2-dipole.
To the data needed in thinning a 3-dipole $\{u,v\}$ with internal colors $\{i,j,k\}$ 
we must add the $k$-edge $[r,s]$. Note that the $k$-edges $[u,v]$ and $[r,s]$ are in the same $hk$-gon,
where $h$ is the fourth color. Denote by $\Delta_{rs}$ the dual of $[r,s]$.
Let $\nabla_u \cup \nabla v \cup \Delta_{rs}$ be called a {\em balloon}. 
Note that it consists of 2 PL3-faces $\nabla_u$ and $\nabla_v$ sharing three PL2-faces 
in colors $\{i,j,k\}$ together
with a $k$-colored PL2-face whose intersection with  $\nabla_u \cup \nabla_v$ is 
a PL1-face corresponding to the dual of the $hk$-gon, where $h$ is the fourth color.
Let $\nabla_u \cup \nabla_v$ be the {\em balloon's head} and let $\Delta_{rs}$
be the {\em balloon's tail}.

\subsection{The Strategy for finding the $O(n^2)$-algorithm}
We want to find a PL-embedding for the dual $\mathcal{H}^\star$ of a $J^2$-gem $\mathcal{H}$ 
into $\mathbb{S}^3$. To this end we remove one PL3-face of $\mathcal{H}^\star$ 
(one vertex of $\mathcal{H}$) and find
a PL-embedding in $\mathbb{R}^3$  forming a a PL-triangulated tetrahedron. 
After we use the inverse of a stereographic projection with center
in the exterior of the triangulated tetrahedron. In this way we recover in $\mathbb{S}^3$
the missing PL3-face.

In this work we describe the PL-embedded PL3-faces of $\mathcal{H}^\star$ 
into $\mathbb{R}^3$ by making it geometrically clear that its boundary
is a set of 4 PL2-faces, one of each color, forming an embedded $\mathbb{S}^2$ whose interior is
disjoint from the interior of 
$\mathbb{S}^2$'s corresponding to others PL3-faces. Thus, for our purposes
it will be only necessary to embed the 2-skeleton of $\mathcal{H}^\star$.

A direct approach to find the PL-embedding of the dual of a general $J^2$-gem with $2n$ vertices,
seems very hard. Therefore, we split the algorithm into 4 phases. 

In the first phase we find a sequence of $n-1$ 2-dipole thickenings into
3-dipoles not using color 3, where the new involved edge is either 0 or 1 so that the 
final gem is simply a circular arrangement 
of $n$ 3-dipoles with internal
colors $0,1,2$. Such a canonical $n$-gem is 
named a {\em bloboid} and is denoted $\mathcal{B}_n$. 
Such 3-dipoles are also named a {\em blob over a 3-colored edge}.
A $J^2B$-gem is a gem that, after canceling blobs over $3$-colored edge becomes
a $J^2$-gem. 
This indexing decreasing sequence is easily obtainable from the
primal objects, in the case, simplifying $J^2B$-gems until the bloboid is obtained: 
$$\mathcal{H}=\mathcal{H}_{n} \mapright{2dip}{thick_1} \mathcal{H}_{n-1} \mapright{2dip}{thick_2} \ldots 
\mapright{2dip}{thick_{n-1}} \mathcal{H}_{1}=\mathcal{B}_n=\mathcal{B}.$$ 

In the second phase we first find (phase 2A) specific abstract PL-triangulations, 
for the PL2-faces for the index increasing sequence of abstract colored 2-dimensional PL-complexes.
Each of these complexes, latter, are going to be PL-embedded into $\mathbb{R}^3$ so 
that the PL2-faces are topologically 2-spheres with disjoint interior. Attaching 
3-balls bounded by these spheres we get the dual of the $J^2$-gem $\mathcal{H}$ (with a vertex
removed).
$$\mathcal{B}^\star=\mathcal{H}^\star_{1} \mapright{bp}{move_1}  \mathcal{H}^\star_{2}  
\mapright{bp}{move_{2}} \ldots
\mapright{bp}{move_{n-1}} \mathcal{H}^\star_{n}=\mathcal{H}^\star.$$ 
In parallel to the construction of the sequence $\mathcal{H}^\star_{m}$'s we also construct
(phase 2B) a sequence of {\em wings} $\mathcal{W}_{m}$'s and their {\em nervures} 
$\mathcal{N}_{m}$'s so that  
$\mathcal{S}_m=\mathcal{W}_{m} \cup \mathcal{N}_{m}$,
called {\em a strut}, is an adequate planar graph, defined recursively. 
Moreover, wings and nervures are partitioned into their {\em left} and {\em right} parts: 
$\mathcal{W}_m=\mathcal{W}'_m \cup \mathcal{W}''_m$ and
$\mathcal{N}_m=\mathcal{N}'_m \cup \mathcal{N}''_m$. The sequence of struts is
$$\mathcal{S}_1^\star=\mathcal{W}_{1} \cup \mathcal{N}_{1} 
\mapright{wbp}{move_{1}} 
\mathcal{S}_2^\star=\mathcal{W}_{2} \cup \mathcal{N}_{2} \mapright{wbp}{move_{2}} \ldots
\mapright{wbp}{move_{n-1}} 
\mathcal{S}_n^\star=\mathcal{W}_{n} \cup \mathcal{N}_{n}=
\mathcal{W} \cup \mathcal{N}=\mathcal{S}^\star.$$ 
Each wing $\mathcal{W}_m$'s corresponds to a section 
of the previous sequence $\mathcal{H}_m^\star$'s by two adequate 
fixed semi-planes $\Pi'$ and $\Pi''$.
The construction of the struts (which are planar graphs) is
recursive. Initially $\mathcal{W}_1^\star$ is a set of $2n$ lines incident to $a_1$ 
and a set of $2n$ lines incident to $b_1$, while $\mathcal{N}_1^\star$ 
is $\varnothing$.  Going from $\mathcal{S}^\star_{m}$
to $\mathcal{S}_{m+1}^\star$ is very simple: two new vertices and four
new edges appear, so as to maintain planarity.

In the third phase we make the abstract final element 
$\mathcal{W}^\star \cup \mathcal{N}^\star$
of the second phase {\em rectilinearly} (that is each edge is a straight line segment) PL-embedded. 
By a cone construction we obtain from the rectilinearly PL-embedded strut
$\mathcal{S}^\star$ a special
PL-complex, named $\mathcal{H}_1^\diamond$. This complex does not correspond to a gem dual
and it can be loosely explained as $\mathcal{H}_1^\star$ with all balloon's heads ``opened''.

The fourth phase, the {\em pillow filling phase} 
starts with $\mathcal{H}_1^\diamond$ and the uses the abstract sequence 
$\mathcal{H}^\star_{m}$'s to produce 
a pillow filling sequence $$\mathcal{H}^\diamond_{1}  \mapright{pillow}{filling_{1}}  
\mathcal{H}^\diamond_2  \mapright{pillow}{filling_{2}} \ldots  \mapright{pillow}{filling_{n-1}}
\mathcal{H}^\diamond_{n}=\mathcal{H}^\star.$$
In this phase everything is embedded into $\mathbb{R}^3$ and the last 
element, $\mathcal{H}^\star$,
is the PL-embedding that we seek: the PL-embedding of the dual of the original
$J^2$-gem $\mathcal{H}$ minus a vertex into $\mathbb{R}^3$.

The whole procedure can be implemented as a formal 
algorithm that takes $O(n^2)$-space
and $O(n^2)$-time complexity, where $2n$ is the number of vertices of the original $J^2$-gem.

\subsection{A complete example}
In the example corresponding to Fig. \ref{fig:winglist01}, \ldots, Fig. \ref{fig:winglist06}
we gather and display the data structure we need for the $O(n^2)$-algorithm
to PL-embed the dual of the $J^2$-gem $\mathcal{H}_{12}$ of Fig. \ref{fig:winglist06}. 
The wings, nervures and their union, the struts, are
partitioned into left and right parts:
$$\mathcal{W}_m =  \mathcal{W}'_m \cup  \mathcal{W}''_m, \hspace{4mm} 
\mathcal{N}_m =  \mathcal{N}'_m \cup  \mathcal{N}''_m, \hspace{4mm}
\mathcal{S}^\star_m =  \mathcal{S}^{\star'}_m \cup  \mathcal{N}''_m, \hspace{4mm} 
$$

The sequence of figures displays the sequences of three data structures that we need:
the inverse sequence of
$J^2B$-gems $\mathcal{H}_m$, the colored 2-dimensional 
complexes $\mathcal{H}^\star_m$ 
(partitioned into colored PL2-faces)
and the struts $\mathcal{S}_m = \mathcal{W}_m \cup \mathcal{N}_m$. 
The initial data structures are very simple and, in the figures we make clear
how to obtain the next term of these sequences. In particular, the next colored 
2-complex is given as the result of a $bp$-move, which changes a {\em balloon} into a
{\em pillow}. See definitions in Subsection \ref{subsec:primaldualmoves}.

\begin{figure}[H]
\begin{center}
\includegraphics[width=14.7cm]{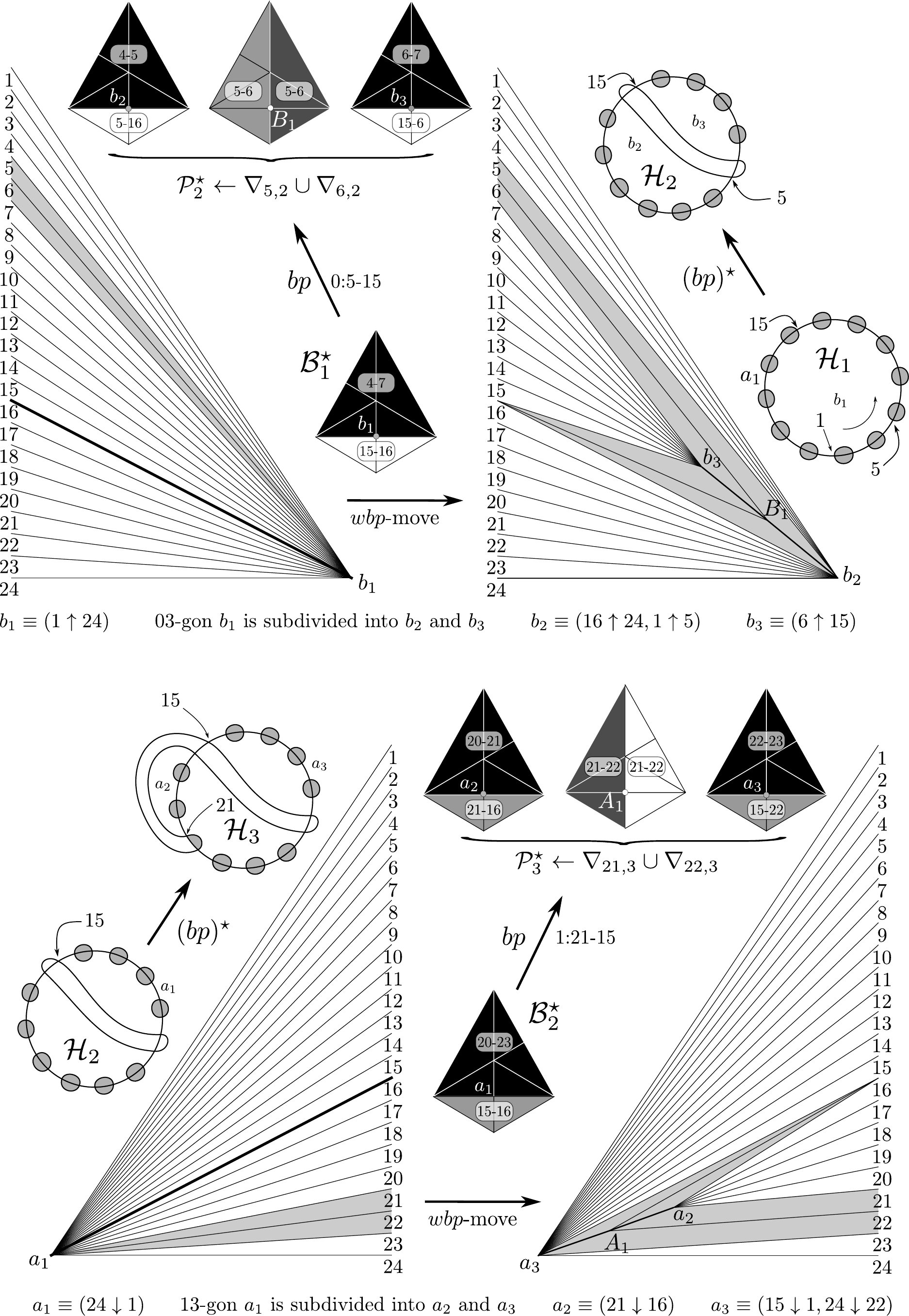} \\
\caption{\sf The initial right wing $\mathcal{W}''_1$
is a set of lines emanating from $b_1$.
The initial right nervure $\mathcal{N}''_1$ is empty. 
The initial left wing $\mathcal{W}'_1$
is a set of lines emanating from $a_1$.
The initial left nervure $\mathcal{N}'_1$ is empty.
At the end of each $wpb$-move a pair of edges is added to the nervure.
Lower case symbols $a_j, b_k$ refer to 13-gons and 03-gons. Upper case symbols
$A_j, B_k$ are auxiliary 0-simplexes.
}
\label{fig:winglist01}
\end{center}
\end{figure}

\begin{figure}[H]
\begin{center}
\includegraphics[width=14.1cm]{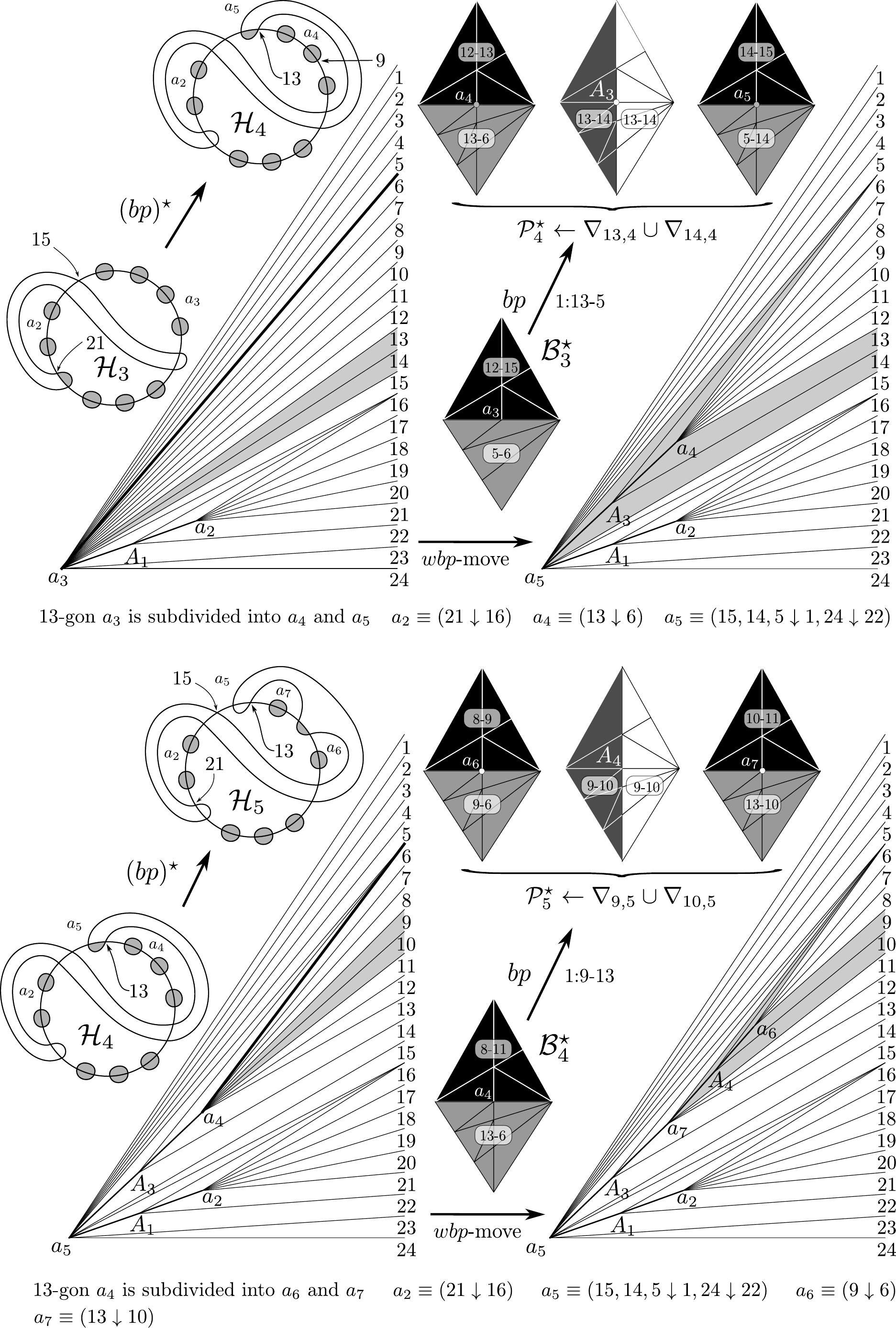} \\
\caption{\sf A left strut is modified by a $wbp$-move. What is needed as input 
is a pair of adjacent 
shaded triangles and a thick edge. In the lower the left strut is further 
modified by another $wbp$-move. The modification of balloons into pillows define the new
colored abstract combinatorial complexes. The nervures $\mathcal{N}_m$ are auxiliary devices
that will be disposed after we find the rectilinear embedded
$\mathcal{W}$ by a deterministic linear algorithm,
See Fig. \ref{fig:winglist06}.}
\label{fig:winglist02}
\end{center}
\end{figure}

\begin{figure}[H]
\begin{center}
\includegraphics[width=15cm]{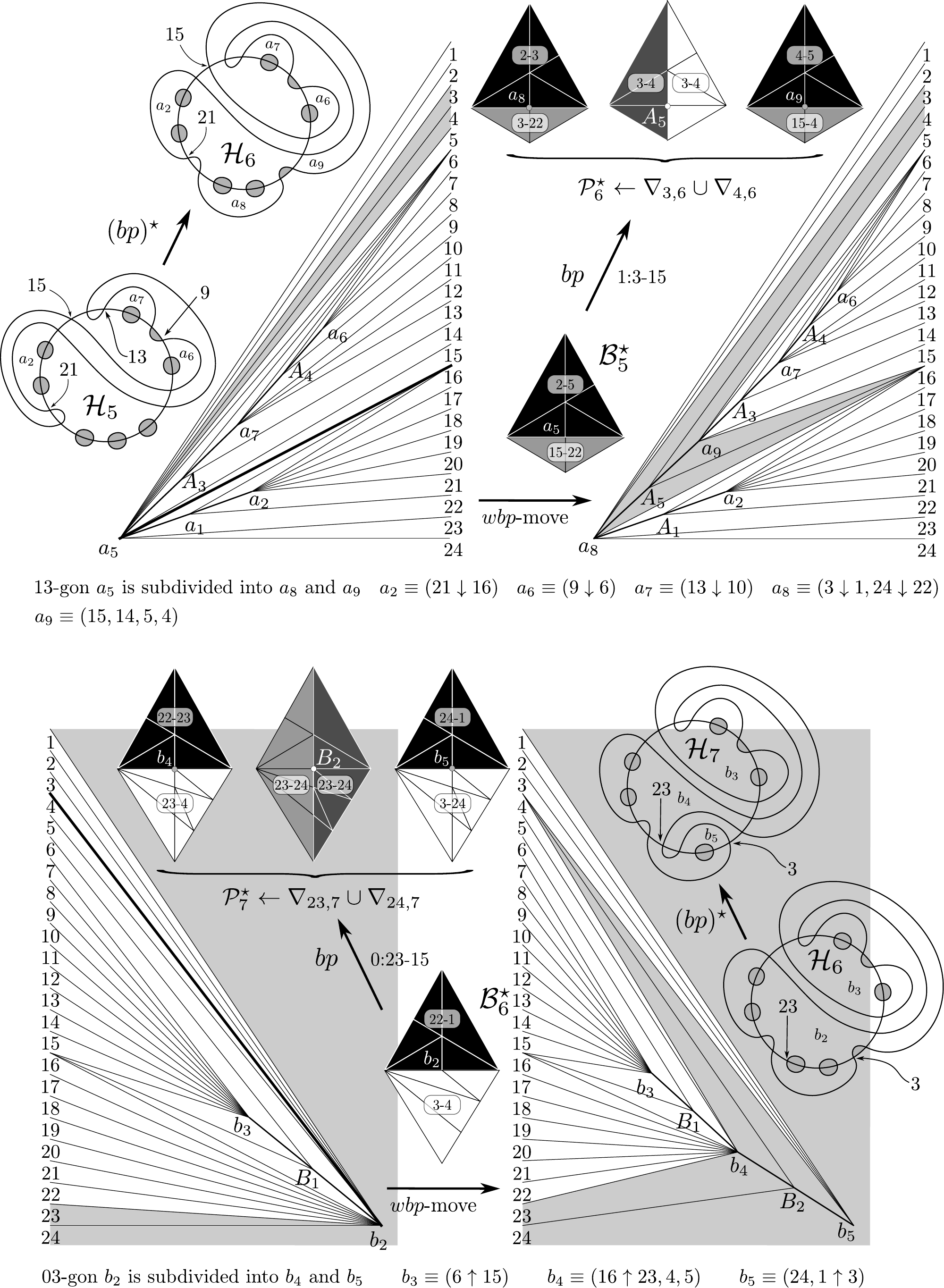}
\caption{\sf
In the first $wbp$-move a left strut is {\em extended based on $a_5$}. 
In all the figures of this $r^{24}_5$-example (except for the $\mathcal{W}$ at the last one) 
we have used Tutte's barycentric
method \cite{tutte1963dg,colin2003tutte} to obtain the embedded final struts. 
However, except for $\mathcal{W}$, only the combinatorics of the embeddings (the rotations)
are needed to encode the struts in an implementation of the algorithm.
In the second $wbp$-move one of the two shaded triangles
is in the outside. Despite this special case, the rotation manifestation of the 
move behaves as usual.
}
\label{fig:winglist03}
\end{center}
\end{figure}

\begin{figure}[H]
\begin{center}
\includegraphics[width=14.7cm]{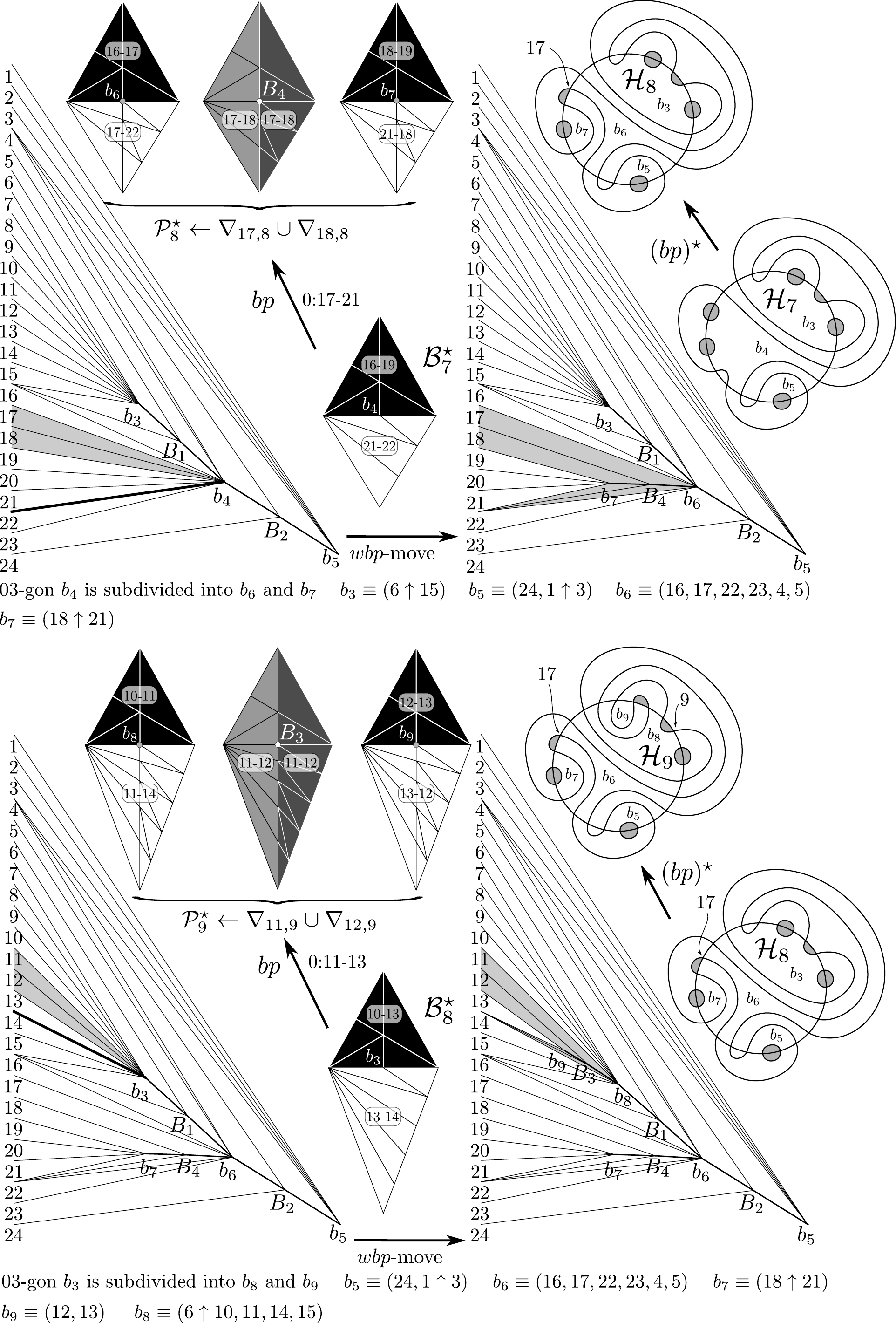} \\
\caption{\sf The first $wbp$-move induces a bifurcation on the nervure of a
right wing based on $b_4$. 
The second $wbp$-move produces an extension based on $b_3$ 
of the nervure of the final right
wing of the first $wbp$-move.
}
\label{fig:winglist04}
\end{center}
\end{figure}

\begin{figure}[H]
\begin{center}
\includegraphics[width=15cm]{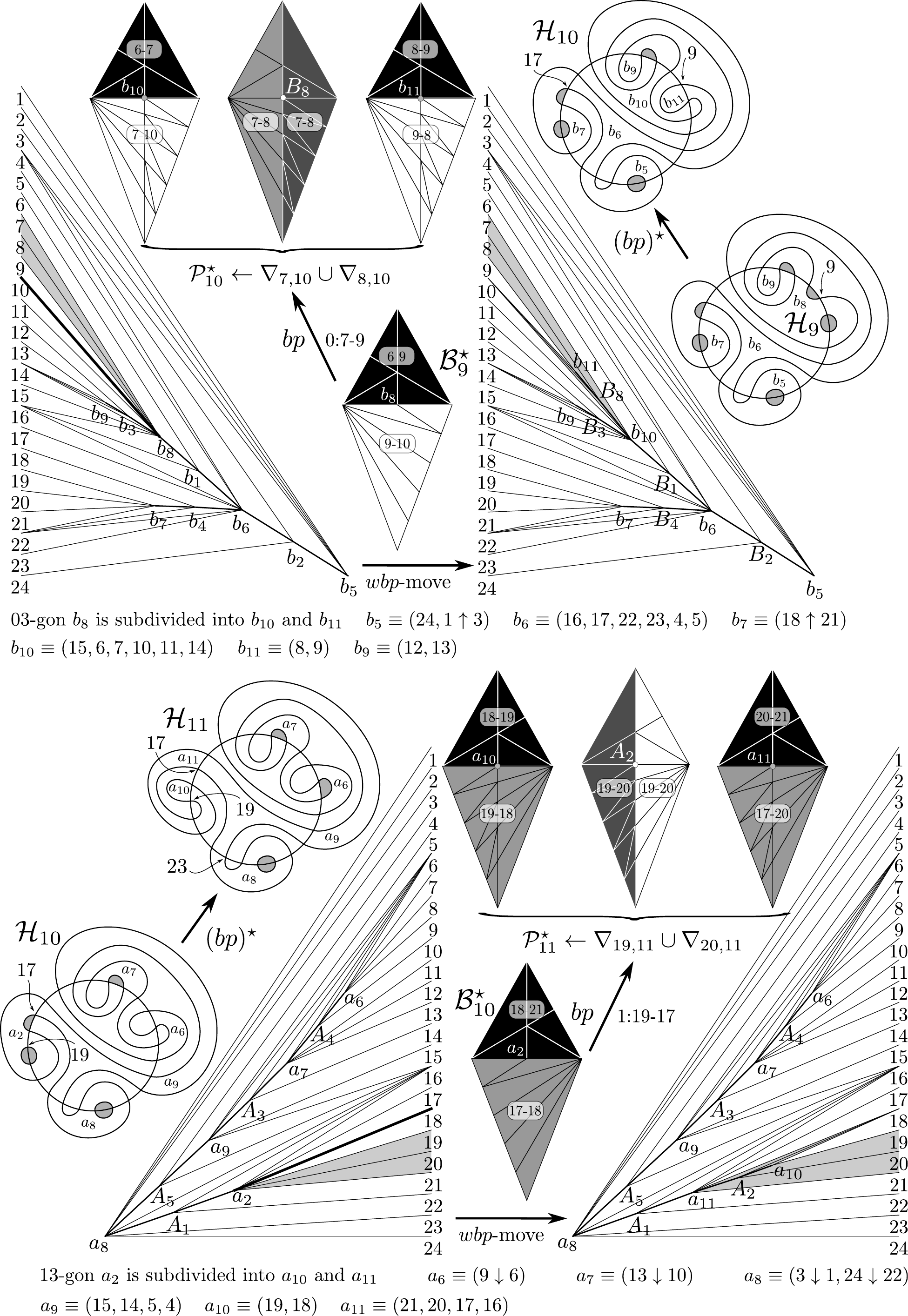} \\
\caption{\sf The first $wbp$-move induces another bifurcation on the nervure of a
right wing based in $b_8$. The second $wbp$-move 
produces an extension of the nervure of a left
wing based on $a_2$.}
\label{fig:winglist05}
\end{center}
\end{figure}

\begin{figure}[H]
\begin{center}
\includegraphics[width=14.5cm]{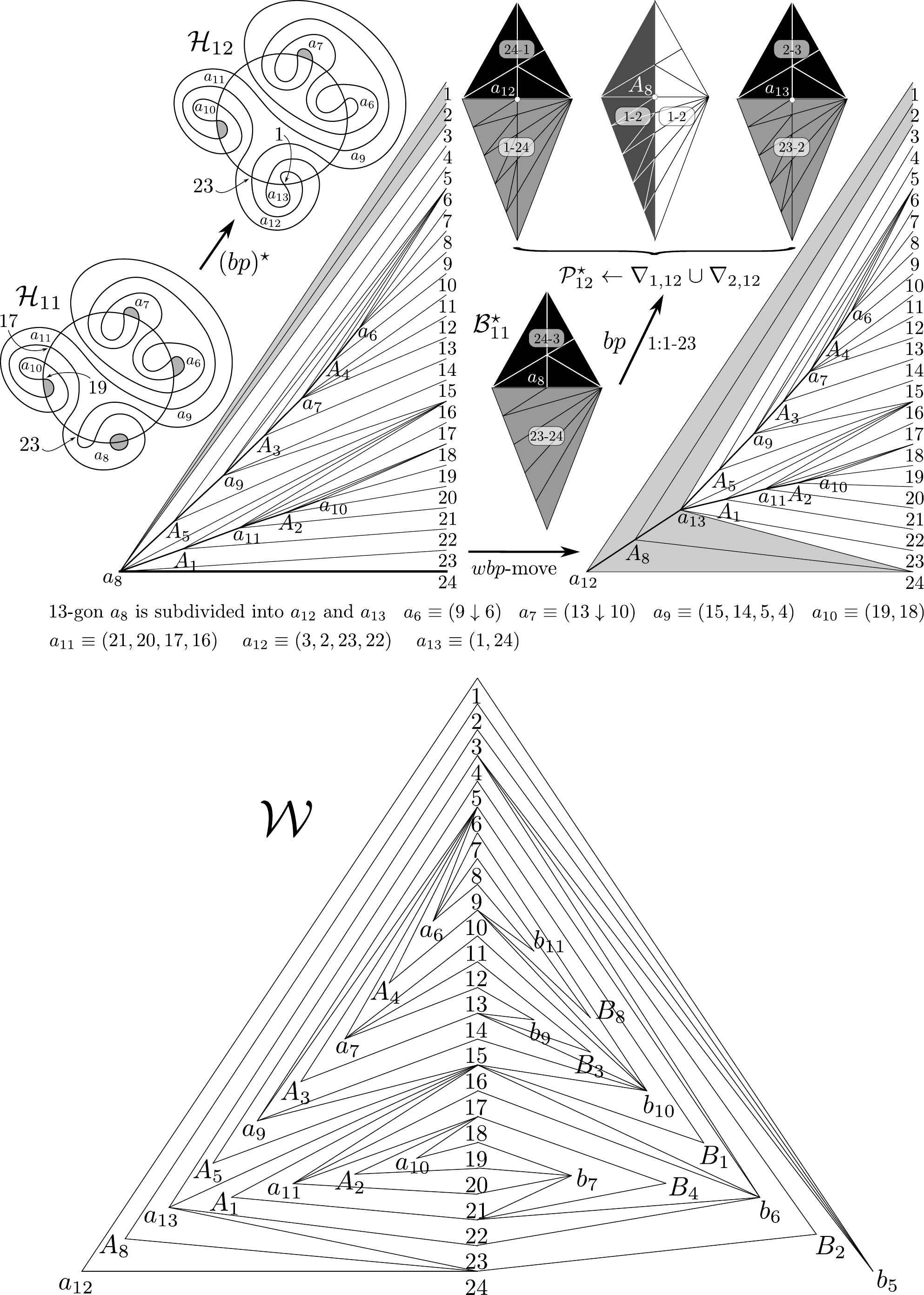} \\
\caption{\sf The globally last $wbp$-move is the extension based at $a_8$ 
(corresponding to a 13-gon which breaks into two $a_{12}$ and $a_{13}$). The bottom part of
the figure depicts the final pair of wings $\mathcal{W}$. 
This is the first time that the geometry of the embedding needs to be specified.
Up to this point we need only the struts given combinatorially.
To obtain the above embedding for  $\mathcal{W}$ we have used the deterministic 
linear algorithm explained in Subsection \ref{subsec:diamondW}. 
}
\label{fig:winglist06}
\end{center}
\end{figure}

\section{Details of the whole construction}
\subsection{First phase: from a $J^2$-gem $\mathcal{H}$ to a bloboid $\mathcal{B}$}

A $k$-dipole $\{u,v\}$ involves color $i$ if it 
if there is an edge of color $i$ linking $u$ to $v$.

Let $H$ be the $J^2$-gem formed by the two Jordan curves $X$ and $Y$.
A {\em $(X,Y)$-duet} in $H$ is a pair of crossings which are consecutive in $X$ and in $Y$.
A {\em $(X,Y)$-trio} in $H$ is, likewise, a triple of crossings that 
are consecutive in $X$ and in $Y$.

\begin{lemma}
\label{lem:trio}
Let $H$ be a $J^2$-gem with $2n\geq4$ vertices. Then $H$ has a $(X,Y)$-trio. 
\end{lemma}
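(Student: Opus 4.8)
The plan is to translate the statement entirely into the language of the two Jordan curves $X$ and $Y$ and then argue by a counting/extremal argument. Think of $X$ as a circle carrying $2n$ marked points (the crossings) in cyclic order $x_1,\dots,x_{2n}$; the curve $Y$ visits these same $2n$ points in some other cyclic order, giving a permutation. A $(X,Y)$-duet is a pair of crossings adjacent along $X$ that are \emph{also} adjacent along $Y$; an $(X,Y)$-trio is three crossings forming a path of two consecutive edges in $X$ that is simultaneously a path of two consecutive edges in $Y$. First I would record the easy fact that since $X$ and $Y$ cross transversally and the configuration is a $J^2$-gem, the edges of $X$ alternate colors $2,3$ and the edges of $Y$ alternate colors $0,1$; in particular an $(X,Y)$-duet is exactly a $2$-dipole or a companion configuration, and I expect the paper wants the trio because a trio is what is needed to perform the thickening move described in the strategy.

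**The core argument: duets first, then upgrade to a trio.**
I would proceed in two steps. Step one: show that $H$ has \emph{many} $(X,Y)$-duets — in fact at least (something linear in $n$), or at the very least more than one that are ``well separated'' along $X$. The cleanest way is a Jordan-curve / innermost-arc argument: consider the arcs of $Y$ cut off by the points on $X$; an \emph{innermost} such arc (one bounding a disk meeting $X$ in no other crossing) forces its two endpoints to be consecutive on $X$, hence gives a duet; and removing/smoothing it and inducting produces a second, disjoint duet. This is essentially the same innermost-disk idea used in the proof of Proposition~\ref{prop:j2shere}, where planarity forces a digon $\{p,q\}$. Step two: upgrade. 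Suppose, for contradiction, $H$ had \emph{no} trio. Then every crossing lies in at most a bounded pattern of duets: a trio $\{x_{i-1},x_i,x_{i+1}\}$ is precisely two duets sharing the middle crossing $x_i$ and remaining consecutive on $Y$, so ``no trio'' says the duets, viewed as edges of an auxiliary graph on the $2n$ crossings, form a matching-like structure. Counting the duets found in step one against this structural constraint, together with the Euler-characteristic bookkeeping from Proposition~\ref{prop:j2shere} ($v(\mathcal H)+4=b(\mathcal H)$, forcing the crystallization to contain very few bigons of certain types), should yield the contradiction once $2n\ge 4$.

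**What I expect the real obstacle to be.**
The delicate point is not the existence of duets — that is a soft Jordan-curve fact — but controlling their \emph{interaction} so that two of them are forced to abut into a trio. The danger scenario is a configuration where duets are abundant but each is ``isolated'', i.e. the crossing between two consecutive duet-edges along $X$ is never consecutive along $Y$. I expect one must rule this out by a parity or orientation argument: as one walks along $X$ through a maximal run of consecutive duet-edges, the corresponding points trace a path in $Y$ that must eventually close up or be blocked, and a short case analysis on the colors $0,1$ (respectively $2,3$) of the incident edges of $Y$ (respectively $X$) at the would-be middle crossing shows the path of length two cannot be avoided. Handling the boundary case $2n=4$ by hand (where the two curves cross four times and one checks directly that a trio appears) anchors the induction. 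So the skeleton is: (i) reduce to the two-curve picture and fix colorings; (ii) innermost-arc argument producing a duet, and an induction producing enough duets; (iii) assume no trio, extract the ``isolated duets'' structural constraint; (iv) derive a numerical contradiction using the crystallization count of Proposition~\ref{prop:j2shere}; (v) dispatch $2n=4$ directly.
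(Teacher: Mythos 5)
Your step (ii) is sound as far as it goes: in the arrangement of $X$ and $Y$ on the sphere every face has an even number of sides (the edges around a face alternate between $X$-arcs and $Y$-arcs), and Euler's formula with $V=2n$, $E=4n$, $F=2n+2$ gives $\sum_k(4-2k)f_{2k}=8$, hence at least four bigonal faces, each of whose corner pairs is a duet. The genuine gap is in steps (iii)--(iv), and you have correctly located it yourself (``the delicate point is \ldots controlling their interaction''): the numerical contradiction you hope for does not exist. ``No trio'' is exactly the statement that the duet-pairs form a matching on the $2n$ crossings (two duets sharing a crossing automatically yield a trio, since the shared crossing has only two neighbours on each curve), and a matching on $2n$ vertices can contain up to $n$ pairs, whereas the Euler count guarantees only \emph{four} bigons no matter how large $n$ is (the remaining $2n-2$ faces could all be quadrilaterals as far as the counting is concerned). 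So for $n\ge 4$ four pairwise disjoint duets are numerically consistent, and the crystallization identity $v(\mathcal{H})+4=b(\mathcal{H})$ of Proposition \ref{prop:j2shere} adds nothing, since it is essentially the same Euler computation and says nothing about \emph{which} faces are bigons. The ``parity or orientation argument'' you defer to is therefore not a finishing touch but the entire content of the lemma, and it is missing.

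The paper takes a different and much shorter route: induction on $n$. Take any duet $D$ (existence by the Jordan curve theorem, as above); if $D$ extends to a trio you are done; otherwise perturb $Y$ slightly so that it no longer meets $X$ at the two crossings of $D$, obtaining a $J^2$-gem with $2n-2$ crossings, apply the induction hypothesis to find a trio there, and observe that this trio persists in $H$ (only the two adjacencies created by deleting $D$ could spoil it). If you want to salvage your extremal strategy you would need a structural rather than numerical argument --- for instance, analyzing an innermost bigon and showing that when it fails to abut another duet its removal cannot destroy the trios of the smaller configuration --- which is essentially the paper's induction in disguise. Your correct handling of the base case $2n=4$ (where four distinct duets cannot form a matching on four vertices, so a trio is forced) is the one place where the counting alone does suffice.
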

\begin{proof}
By the Jordan theorem $H$ has a $(X,Y)$-duet $D$. If $n=2$ then $H$ has clearly a 
$(X,Y)$-trio establishing the basis of the induction.
Suppose $H$ has $2n\geq 4$ vertices. It has a $(X,Y)$-duet $D$.
If $D$ extends to a trio, then we are done.
Otherwise slightly deform $Y$ to miss $D$. The resulting
$J^2$-gem $H'$ has $2n-2$ crossings and by induction hypothesis $H'$ has a $(X,Y)$-trio $T$,
which is present in $H$, establishing the Lemma.
\end{proof}

\begin{proposition}
\label{prop:startingwith}
Starting with a $J^2$-gem $\mathcal{H}$ with $2n$ vertices we can arrive to an
$n$-bloboid $\mathcal{B}$ by means of $n-1$ operations which 
thickens a 2-dipole involving color 2 into a 3-dipole, where the new edge is of color 0 or color 1,
producing a sequence of $J^2B$-gems each inducing $\mathbb{S}^3$,
$$(\mathcal{H}=\mathcal{H}_{n},\mathcal{H}_{n-1},\ldots, 
\mathcal{H}_{2},\mathcal{H}_{1}=\mathcal{B}).$$

\end{proposition}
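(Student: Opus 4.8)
The plan is to build the sequence by induction on $n$, repeatedly extracting an $(X,Y)$-trio via Lemma~\ref{lem:trio} and using it to locate a legal $2$-dipole-thickening. The base case $n=1$ is immediate: a $J^2$-gem on $2$ vertices is, up to the obvious relabelling, already the $1$-bloboid $\mathcal{B}_1$, so there is nothing to do. For the inductive step, suppose $\mathcal{H}=\mathcal{H}_n$ is a $J^2$-gem with $2n\geq 4$ vertices. Apply Lemma~\ref{lem:trio} to obtain a $(X,Y)$-trio $T=(p,q,r)$: three crossings consecutive along $X$ and along $Y$. The middle crossing $q$ is adjacent to $p$ along one segment of each curve and to $r$ along the other, so in the gem $\mathcal{H}$ the vertex $q$ is joined to $p$ by two parallel edges (one an $X$-segment, one a $Y$-segment) and to $r$ by two further parallel edges. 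Because the four segment-colors are $\{0,1\}$ for $Y$-segments and $\{2,3\}$ for $X$-segments, and the segments alternate in color along each curve, one of the pairs $\{p,q\}$, $\{q,r\}$ is joined by edges of colors $\{2,0\}$ or $\{2,1\}$ — in either case a pair joined by a $2$-colored edge together with an edge of color $0$ or $1$. I would check (this is the one place a short case analysis is needed) that such a parallel pair is genuinely a $2$-dipole, i.e.\ that deleting the two relevant colors disconnects $q$ from its neighbor; this follows from planarity and the trio property exactly as the parallel-edge argument in the proof of Proposition~\ref{prop:j2shere}.

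Having identified a $2$-dipole $\{x,y\}$ in $\mathcal{H}_n$ involving color $2$, with an incident edge of color $0$ or $1$, I would first \emph{cancel} it. Cancelling a $2$-dipole is the inverse of a $2$-dipole creation and, as recalled in Subsection~1.4, leaves the induced manifold invariant; moreover the resulting graph $\mathcal{H}'$ is again a $(3{+}1)$-graph, is again planar, and — since cancelling the dipole corresponds on the curve side to the local smoothing ``slightly deform $Y$ to miss the duet'' used in Lemma~\ref{lem:trio} — is again a $J^2$-gem, now on $2(n-1)$ vertices. By Proposition~\ref{prop:j2shere}, $|\mathcal{H}'|=\mathbb{S}^3$. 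Apply the induction hypothesis to $\mathcal{H}'$ to get a sequence
$$(\mathcal{H}'=\mathcal{H}'_{n-1},\mathcal{H}'_{n-2},\ldots,\mathcal{H}'_{1}=\mathcal{B}_{n-1})$$
of $J^2B$-gems, each inducing $\mathbb{S}^3$, connected by $n-2$ legal $2$-dipole-thickenings.

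It remains to prepend one term. I would \emph{thicken} the cancelled $2$-dipole back into a $3$-dipole: by the definition in Subsection~1.3, pick the third color $k\notin\{2,i\}$ (where $i\in\{0,1\}$ is the color of the chosen incident edge), remove the $k$-edges at $x$ and $y$, and reinsert $k$-edges $[u,v]$ and $[r,s]$. This is precisely the move ``thickens a $2$-dipole involving color $2$ into a $3$-dipole, where the new edge is of color $0$ or color $1$'' (choosing $k\in\{0,1\}$, which is possible since $i$ was the one already present and $2\notin\{0,1\}$ forces $k$ into $\{0,1\}$ whenever $i$ leaves room — here a small bookkeeping check that the incident edge can always be chosen so the complementary new color lies in $\{0,1\}$, again a consequence of the trio having both a $Y$-edge and an $X$-edge at its center). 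Thickening a $2$-dipole always yields a gem (Subsection~1.3), and it produces exactly $\mathcal{H}_n$ back (thickening is the inverse of the thinning one would perform to recover $\mathcal{H}'$). Since $\mathcal{H}_n$ is a $J^2$-gem, it is a $J^2B$-gem, and $|\mathcal{H}_n|=\mathbb{S}^3$ by Proposition~\ref{prop:j2shere}. Setting $\mathcal{H}_{m}:=\mathcal{H}'_{m}$ for $m\leq n-1$ and keeping $\mathcal{H}_n$ completes the sequence of $n-1$ thickenings, each term a $J^2B$-gem inducing $\mathbb{S}^3$, as required.

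The main obstacle I anticipate is not the manifold-invariance bookkeeping (that is all quoted from Subsections~1.2--1.4) but the combinatorial claim that the $(X,Y)$-trio always furnishes a $2$-dipole \emph{of the prescribed color type} — specifically that among the two parallel-edge pairs at the trio's center one can always be chosen to be a $2$-dipole whose complementary thickening color lies in $\{0,1\}$. I would handle this by a direct examination of the four color-patterns a trio can exhibit on the two curves (the two $X$-segments at $q$ have colors $\{2,3\}$ in some order, the two $Y$-segments colors $\{0,1\}$ in some order), checking in each that one of $\{p,q\}$, $\{q,r\}$ is a $2$-dipole and recording which third color is available; the dipole property itself, in each case, reduces to the same planar-separation argument already used for double edges in Proposition~\ref{prop:j2shere}.
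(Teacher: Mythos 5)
Your identification of the key combinatorial step is the right one --- both you and the paper use Lemma~\ref{lem:trio} to find a trio whose middle vertex lies in a duet joined by a $2$-colored edge and a $0$- or $1$-colored edge, and both defer the ``this parallel pair really is a $2$-dipole'' check to the same planarity argument. But the mechanics of your induction contain a genuine error: you have conflated \emph{cancelling} a $2$-dipole (which deletes the two vertices $x,y$ and welds the hanging edges, producing a gem on $2(n-1)$ vertices) with \emph{thickening} a $2$-dipole into a $3$-dipole (which keeps $x$ and $y$, reroutes two edges of the missing color $k\in\{0,1\}$ so as to add a $k$-edge between them, and produces a gem still on $2n$ vertices in which $\{x,y\}$ is now a blob). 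These are different operations with different inverses: the inverse of cancellation is $2$-dipole \emph{creation}, while the inverse of thickening is the \emph{thinning} of a $3$-dipole back to a $2$-dipole in the same vertex set. Consequently your claim that thickening ``produces exactly $\mathcal{H}_n$ back'' from $\mathcal{H}'$ is false, the first step of your sequence $(\mathcal{H}_n,\mathcal{H}'_{n-1},\ldots)$ is a vertex-removing cancellation rather than one of the prescribed thickenings, and your sequence terminates at the $(n-1)$-bloboid $\mathcal{B}_{n-1}$ on $2(n-1)$ vertices instead of the required $n$-bloboid $\mathcal{B}_n$ on $2n$ vertices.

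The repair is essentially what the paper does: never shrink the gem. All terms $\mathcal{H}_n,\ldots,\mathcal{H}_1$ have $2n$ vertices; what shrinks is the underlying $J^2$-gem obtained from the current $J^2B$-gem $\mathcal{H}_\ell$ by cancelling its accumulated blobs. One finds the trio in that underlying $J^2$-gem (this is where your cancellation legitimately lives --- inside the bookkeeping, not in the output sequence), restores the blobs, checks the color-$2$ duet is still a $2$-dipole of $\mathcal{H}_\ell$, and thickens it \emph{in place} to obtain $\mathcal{H}_{\ell-1}$, which now carries one more blob. If you prefer your forward induction, you must additionally argue that the sequence of thickenings produced for $\mathcal{H}'$ lifts to a sequence of thickenings of the $2n$-vertex gem carrying the new blob along, and that inserting that blob into $\mathcal{B}_{n-1}$ yields $\mathcal{B}_n$; none of this is in your write-up.
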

\label{prop:backsequence}
\begin{proof}
The proof is by backward induction. 
For $\ell=n$ we have $\mathcal{H}_{n}=\mathcal{H}$ and
so it is a $J^2B$-gem, establishing the basis of the induction. Assume that
$\mathcal{H}_\ell$ is a $J^2B$-gem.
For $\ell>1$, let $\mathcal{H}'_\ell$ denote  $\mathcal{H}_\ell$ after canceling the blobs.
By Lemma \ref{lem:trio} $\mathcal{H}_\ell'$ has a $(X,Y)$-trio $(x,y,z)$. Thus $y$
is incident to two 2-dipoles. One of these dipoles, call it $D$,
involves color 2, the other involves color 3.
Take the one involving color 2, name it $D$. Put back the blobs over the edges of color 3.
So, $D$ is present in $\mathcal{H}_\ell$. The colors involved in 
$D$ are $0$ and $2$ or $1$ and $2$. In the first case we use color 1 to thicken $D$ and
in the second we use color $0$ for the same purpose.
This defines the $J^2B$-gem $\mathcal{H}_{\ell-1}$, which
establishes the inductive step. In face of Proposition \ref{prop:j2shere} and from
the fact that thickening dipoles on gems produce gems inducing the same manifold, 
every member of the sequence induces $\mathbb{S}^3$.
\end{proof}


\subsection{Second phase: colored abstract complexes, their wings, nervures}
The second phase starts with an easy task, namely to define the dual of the bloboid,
named $\mathcal{H}_1^{\star}$. We get this first term in an embedded form. 
The others $\mathcal{H}_2^{\star}$, \ldots, $\mathcal{H}_n^{\star}$,
are, at this stage, obtained by slight modifications of the ancestor, but only in 
an abstract combinatorial way. In doing so we get the minimum 
level of refinement in the PL2-faces required, so that latter, the levels are sufficient
for a geometric PL-embedding in $\mathbb{R}^ 3$ which we seek.

\subsubsection{Primal and dual correspondence between the gem and the colored complex}
There is a simple topological interpretation between primal and dual complexes, given in 
\cite{lins1995gca} pages 38, 39. Let us take a look 
at this interpretation in our context. This will help 
 to understand the $PL$-embedding $\mathcal{H}_m^\star$.
In what follows the $k$ in PL$k$-face means the dimension $k\in\{0, 1, 2,3\}$ of the PL-face.

\begin{itemize}
 \item [i.] 
a vertex $v$ in $G$ $\rightleftharpoons$ a 
solid PL-tetrahedron or PL3-face,
denoted by $\nabla_v$ in the dual of the gem whose
PL0-faces are labeled $z_0$, $z_1$, $z_2$ e $z_3^v$; 
in this work, it is enough to work with the 
boundary of a PL3-face; this is topologically a sphere
$\mathbb{S}^2$ with four PL2-faces one of each color; the 3-simplices forming
a PL3-face need not be explicitly specified;
 \item [ii.] an $i$ colored edge $e_i$ in $G$ $\rightleftharpoons$ a set 
of $i$-colored 2-simplices defining 
a PL2$_i$-face in the dual of the gem;
 \item [iii.] a bigon $B_{ij}$ using the colors $i, j$ in 
$G$ $\rightleftharpoons$ a set of  1-simplices $b_{ij}$
 in $\mathcal{H}_n^\star$ defining a 
PL1$_{ij}$-face;
 \item [iv.] an $\overline{i}$-residue $V_i$ in $G$ 
$\rightleftharpoons$ a 0-simplex in $\mathcal{H}_n^\star$ defining a PL0$_i$-face.
\end{itemize}

\subsubsection{Defining $\mathcal{H}_1^\star$ and the colored abstract PL-complexes: 
$\mathcal{H}_2^{\star}$, \ldots, $\mathcal{H}_n^{\star}$}

We define the combinatorial 2-dimensional PL complex $\mathcal{H}_1^{\star}$ as follows.

\begin{figure}[H]
\begin{center}
\includegraphics[scale=1]{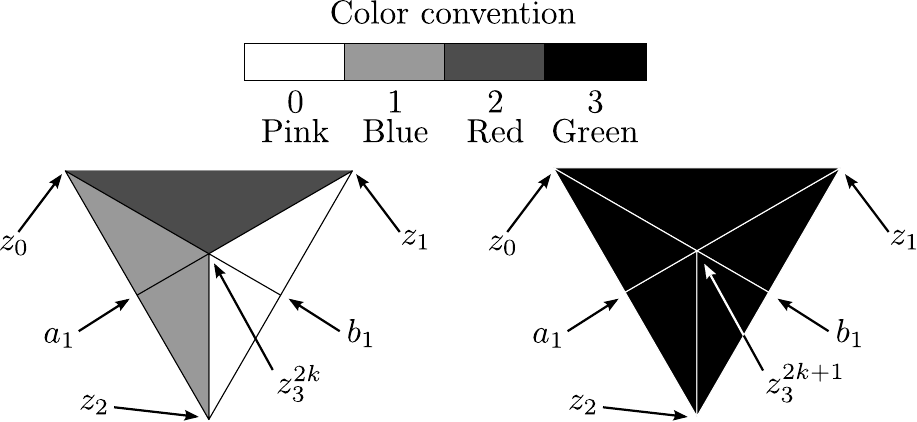}
\caption{\sf PL2-faces of $\mathcal{H}^\star_1$: the figure is an abbreviation of a stack
of tetrahedra, where $k = 0,1,\ldots,n-1.$ The 0-simplices 
$z_3^j$ are defined as $z_3^j=(0,0,2n-j)$, $1\le j \le 2n$. For even $j=2k$,
there are five simplices incident to $z_3^j$: two 0-colored, two 1-colored and 1 2-colored.
For odd $j=2k+1$, the five 2-simplices incident to $z_3^j$ are all 3-colored.
The 0-simplices $z_0$, $z_1$ and $z_2$ are positioned in clockwise order 
as the vertices of an equilateral triangle 
of side 1 in the 
$xy$-plane so that $z_0z_1$ is parallel to the $x$-axis and the center of the triangle
coincides with the origin of an $\mathbb{R}^3$-cartesian system. 
The 0-simplex $a_1$ is $\frac{z_0+z_2}{2}$. The 0-simplex
$b_1$ is $\frac{z_2+z_1}{2}$. Note that, in general, the PL3-faces are given
by their boundary. We never use 3-simplices explicitly to triangulate the PL3-faces.
From our construction, however, it will be clear that this is possible to achieve
without spurious intersections among the 3-simplices.
}
\label{fig:dualPL2}
\end{center}
\end{figure}

We detail the connection of the $J^2$-gem and its dual. In particular we 
use the unique 23-gons of it to provide labels $1,2,\ldots,2n$ in the cyclic 
order of the 23-gon. This labellings correspond to PL3-faces of the 
$\mathcal{H}_1^{\star}$ and will be maintained for the PL3-faces 
of the whole remaining sequence $\mathcal{H}_2^{\star}$, 
\ldots, $\mathcal{H}_n^{\star}$. This invariance is a dual 
manifestation of the fact that in
the thickening of dipoles the labels of the vertices preserved.
Suppose $u$ is an odd vertex of the $J^2$-gem, $u'=u-1$, $v=u+1$ and $v'= v+1$.
The dual of a $\overline{3}$-residue is $z_3^j$ where $j$ is even. 
When $j$ is odd, then $z_3^j$ is a $0$-simplex in the middle of a PL2$_3$-face,
incident to five 2-simplices of color 3.
The dual of the 03-gon is the PL1-face formed by the pair of 1-simplices $z_1b_1$ 
and $b_1z_2$. The dual of the 13-gon is the PL1-face formed by the pair of 
1-simplices $z_0a_1$ and $a_1z_2$. 
The dual of the 23-gon is the PL1-face formed by the 1-simplex $z_0z_1$.
The dual of the 01-gon relative to vertices $u$ and $v$ is the 1-simplex $z_2z_3^v$. 
The dual of the 02-gon relative to vertices $u$ and $v$ is the 1-simplex $z_1z_3^v$.
The dual of the 12-gon relative to vertices $u$ and $v$ 
is the 1-simplex $z_0z_3^v$.
The dual of a 3-colored edge $u'u$ is the image of PL2$_3$-face with odd index $u$ in the vertices.
The dual of an $i$-colored 
edge $uv$ with $i\in\{0, 1, 2\}$ is the PL2$_i$-face with even index $v$.

\subsubsection{Primal and dual $bp$-moves}
\label{subsec:primaldualmoves}
Before presenting $\mathcal{H}_m^\star$, $1< m \le n,$ and its embeddings, we need to understand
the dual of the $(pb)^\star$-move and its inverse. In the primal, to apply a $(pb)^\star$-move, we need
a blob and a 0- or 1-colored edge. The dual of this pair is the {\em balloon:} the \index{balloon's head} {\em balloon's head} is
the dual of the blob; the {\em balloon's tail} \index{balloon's tail} is the dual of the $i$-edge. To make it easier to
understand, the $(pb)^\star$-move can be factorable into a 3-dipole move followed by a 2-dipole move, 
so in the dual, it is a smashing of the head of the balloon followed by the pillow move 
described in the book \cite{lins1995gca}, page 39.
This composite move is the {\em balloon-pillow move} \index{balloon-pillow move} or \index{bp-move} {\em bp-move}.
Restricting our basic change in the colored 2-complex 
to $bp$-moves we have nice theoretical properties which are responsible for avoiding an exponential
process. 

\begin{figure}[H]
\begin{center}
\includegraphics[width=15cm]{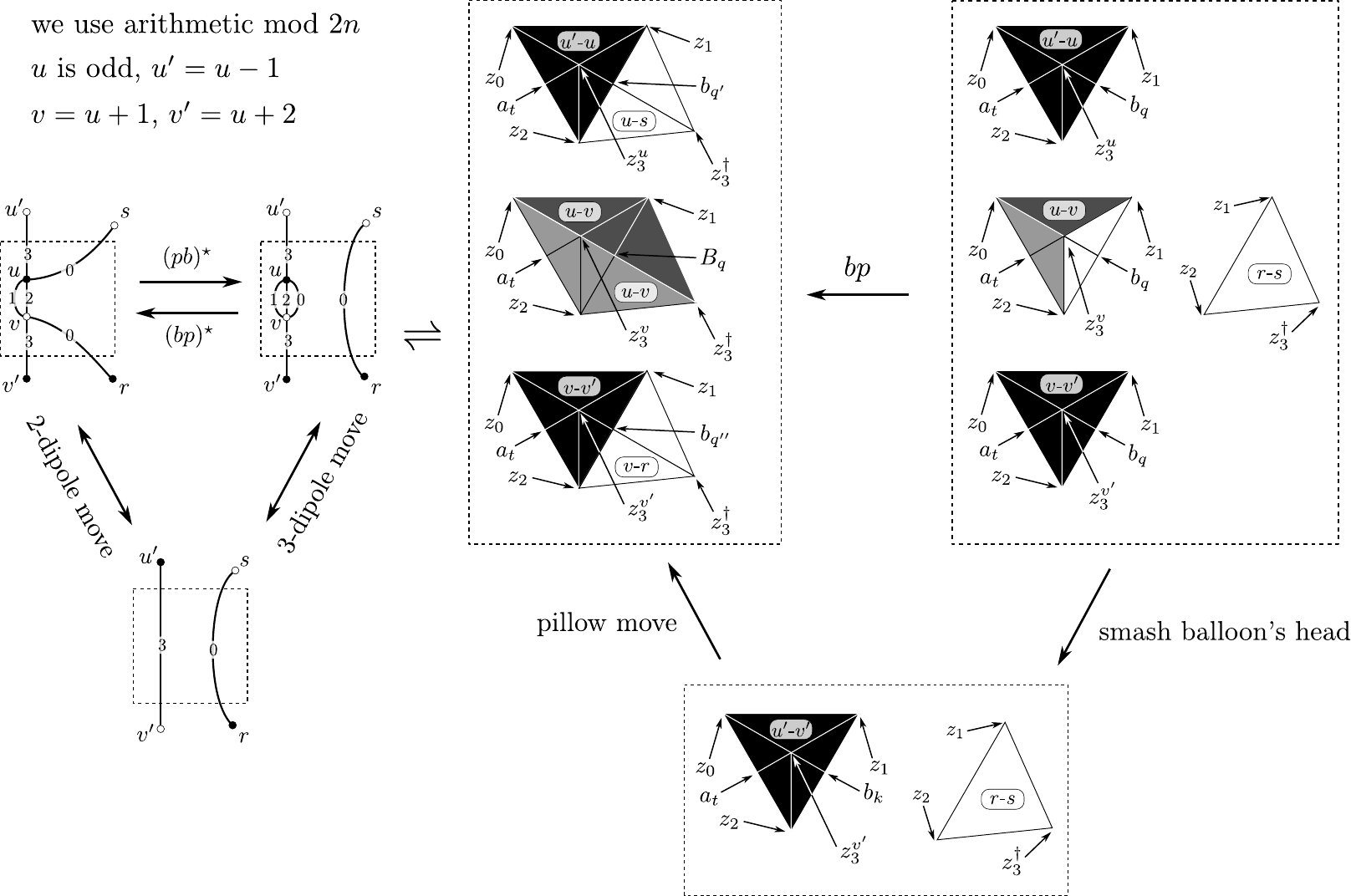}
\caption{\sf Primal and dual $bp$-moves: in what follows we describe the $bp$-move assuming that the balloon's tail is 0-colored
using a generic balloon's tail, of which we just draw the contour. The other case, color 1, is similar.
(1) if the image of $v_5^u$ and $v_5^v$ is $b_q$, create two 0-simplices $b_{q'}$ and $b_{q''}$, 
define the images of $v_5^u$ and $v_5^{v'}$ as $b_{q'}$ 
and $b_{q''}$ and change the label of the image of $v_5^{v}$ from $b_q$ to $B_q$;
(2) make two copies of the PL2$_0$-face, if necessary, 
refine each, from the middle vertex of the segment $z_2z_1$ to the third vertex $z_3^\dagger$, where $\dagger=j,$ for an adequate height $j$;
(3) change the colors of the medial layer of the pillow as specified by the 
dual structure, namely by the current $J^2B$-gem.
}
\label{fig:pillowother2NOVO}
\end{center}
\end{figure}

\subsubsection{Types of PL2-faces}

\begin{proposition}
\label{prop:allkinds}
Each PL2-face of the combinatorial simplicial complex $\mathcal{H}_m^\star$,
$1\le m \le n$, is isomorphic to 
one in the {\em set of types of triangulations} \index{types of triangulations}
$$\{G, P_{2k-1}, P_{2k-1}', B_{2k-1}, B_{2k-1}', R_{2k-1}^b, R_{2k-1}^p \ | 
\ k\in \mathbb{N} \},$$
described in Fig. 
\ref{fig:allkinds3}, where the index means the number of 
edges indicated and is called the {\em
rank of the type}. \index{rank of the type} Moreover, 
the PL2-faces that appear, as duals of the gem edges, 
have the minimum number of 2-simplices.
\end{proposition}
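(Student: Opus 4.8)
The plan is to induct on $m$ along the defining chain $\mathcal{H}_1^\star \to \mathcal{H}_2^\star \to \cdots \to \mathcal{H}_n^\star$ of $bp$-moves. For the base case $m=1$ I would simply inspect $\mathcal{H}_1^\star$, the dual of the bloboid $\mathcal{B}_n$, as spelled out in Fig.~\ref{fig:dualPL2}: for odd index $j$ the PL2$_3$-face is the fan of five $3$-colored $2$-simplices incident to $z_3^j$, and the PL2$_0$-, PL2$_1$- and PL2$_2$-faces attached at even index $v$ are the small triangulated disks spanned respectively by $\{z_1,z_2,b_1,z_3^v\}$, by $\{z_0,z_2,a_1,z_3^v\}$ and by $\{z_0,z_1,z_3^v\}$. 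Comparing each of these with Fig.~\ref{fig:allkinds3} identifies it with $G$ or with a rank-$1$ member among $P_1,P_1',B_1,B_1',R_1^b,R_1^p$, and shows at the same time that no such face carries a superfluous $2$-simplex; this also settles the minimality clause at level $1$.

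For the inductive step I would use that $\mathcal{H}_{m+1}^\star$ arises from $\mathcal{H}_m^\star$ by a single $bp$-move (Subsection~\ref{subsec:primaldualmoves}, Fig.~\ref{fig:pillowother2NOVO}), which is local. Every PL2-face of $\mathcal{H}_m^\star$ disjoint from the balloon being modified is unchanged, hence still on the list by the induction hypothesis; the faces that can change are the balloon's tail (a PL2$_0$- or PL2$_1$-face, of some type $T$ already on the list), the two PL2$_3$-faces of the balloon's head, which get ``opened'', and the faces whose color switches across the medial layer of the resulting pillow. I would then run the finite case analysis dictated by steps (1)--(3) of Fig.~\ref{fig:pillowother2NOVO}: step (1) only relabels the $0$-simplex $b_q$ as $B_q$ and creates $b_{q'},b_{q''}$, leaving all triangulations intact; step (2) duplicates the tail and, only when forced, refines each copy by a single layer running from the midpoint of $z_2z_1$ to an apex $z_3^\dagger$; step (3) recolors the medial layer according to the current $J^2B$-gem. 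Running over the seven families, I expect each $T$ to be carried to itself, to its primed partner ($P\leftrightarrow P'$, $B\leftrightarrow B'$), or to its balloon/pillow partner ($R^b\leftrightarrow R^p$), with the rank passing from $2k-1$ to $2k+1$ precisely in the cases where step (2) inserts the extra layer --- indeed I anticipate that the primed and $b/p$ variants in the list were chosen exactly to make this closure work. The special configuration in which one of the two shaded triangles lies in the unbounded face (cf.\ Fig.~\ref{fig:winglist03}) should be covered by the same argument, since only the rotation system, not the abstract triangulation, differs there.

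The remaining ``minimum number of $2$-simplices'' statement I would derive from the same bookkeeping: a $bp$-move never subdivides a PL2-face except in step (2), where it adds the smallest layer compatible with the next combinatorial type, i.e.\ it raises the rank by the minimal increment $2$; since the rank-$1$ faces of $\mathcal{H}_1^\star$ are minimal and the edge-dual faces only ever grow through these forced refinements, minimality propagates through the induction. As an independent sanity check I would confront this with the rectilinear PL-embedding produced in the later phases, which realizes exactly these triangulations and could not be made rectilinear together with the rest of the $2$-skeleton using a coarser one.

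The hard part is the bookkeeping in the inductive step: one must check, for both colors $0$ and $1$ of the balloon's tail and for each of the seven type-families, that the output of the $bp$-move is again on the list and that its rank is updated correctly. This is a bounded but fiddly verification, carried out by overlaying Fig.~\ref{fig:allkinds3} on the move of Fig.~\ref{fig:pillowother2NOVO}; everything else (locality of the move, the base case, and the minimality count) is routine.
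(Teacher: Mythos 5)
Your proposal follows essentially the same route as the paper: induction along the $bp$-move sequence with a finite case analysis on the type of the balloon's tail, plus the observation that refinement happens only when forced in step (2) and is then minimal, so that minimality of the ranks propagates. The one imprecision is in your anticipated closure rules --- the paper's tables are cross-color (e.g.\ a tail of type $P_{2k-1}$ yields primed copies $P_{2k-1}'$, a face $B_{2k+1}$ of the \emph{other} color with rank raised by two, and an $R_{2k-1}^b$), rather than each family mapping only to itself or its primed partner --- but this is exactly the ``fiddly bookkeeping'' you flag, and carrying it out gives the same lists the paper records.
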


\begin{figure}[H]
\begin{center}
\includegraphics[width=13.1cm]{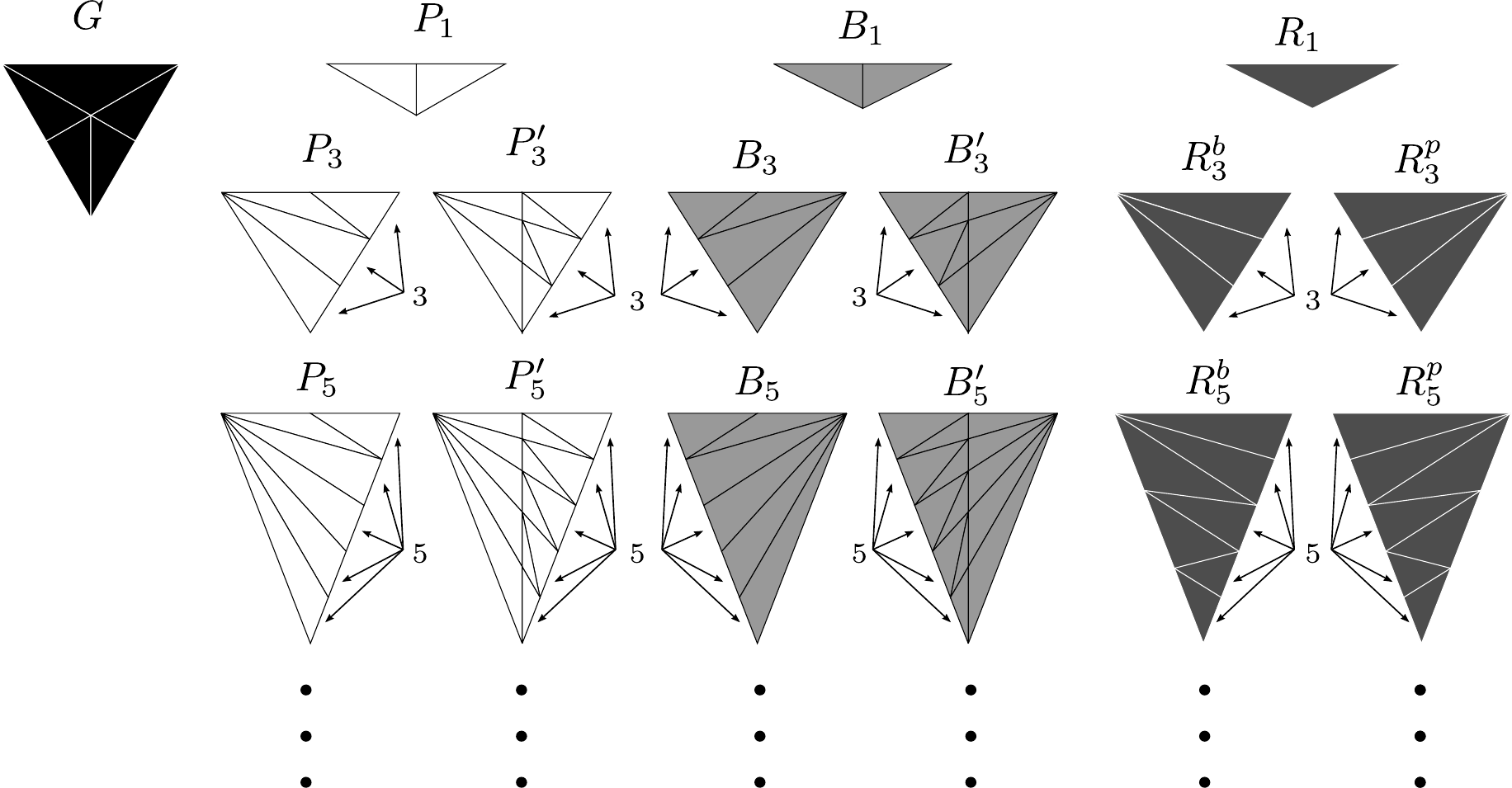}
\caption{\sf All kinds of PL2-faces that we use: 
the choice of the letters $P, B, R, G$ 
comes from the colors $0=(P)ink$, $1=(B)lue$, 
$2=(R)ed$ and $3=(G)reen$. Define
$R_{2k-1}^b$ as the PL2$_2$-face which is inside
the pillow neighboring a PL2$_1$-face. Similarly  $R_{2k-1}^p$ is a PL2$_2$-face which is inside
the pillow neighboring a PL2$_0$-face.These PL2-faces 
are for now abstract combinatorial triangulations that
have the correct level of refinement so as to become PL-embedded into $\mathbb{R}^ 3$.
}
\label{fig:allkinds3}
\end{center}
\end{figure}

\begin{proof}
We need to fix a notation for the head of the balloon. Instead of drawing all the PL2-faces 
of the head, 
we just draw one PL2$_3$-face and put a label $u'$-$v'$. If the balloon's tail, is 
of type $P_1$, by applying a $bp$-move we can see at Fig. \ref{fig:pillowother5}
\begin{figure}[!htb]
\begin{center}
\includegraphics[width=12cm]{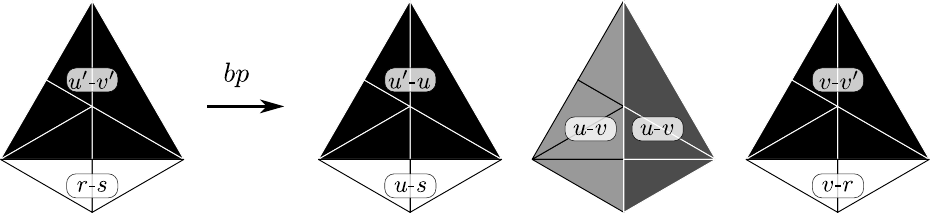}
\caption{\sf A $bp$-move with the balloon's tail of type $P_1$.}
\label{fig:pillowother5}
\end{center}
\end{figure}
that we get a PL2$_1$-face of type $B_3$ and a PL2$_2$-face of type $R_3^b$. 
The others PL2-faces are already known. 
If the balloon's tail, is of type $B_3$, by applying a $bp$-move, we need to refine the tail 
and the copies, otherwise we would not be able to build a pillow
 because some 2-simplices would be collapsed, so
we get two PL2$_1$-faces of type $B_3'$, one PL2$_0$-face of type $P_5$ and a PL2$_2$-face
 $R_5^p$.  The others PL2-faces are already known.

\begin{figure}[!htb]
\begin{center}
\includegraphics[width=12cm]{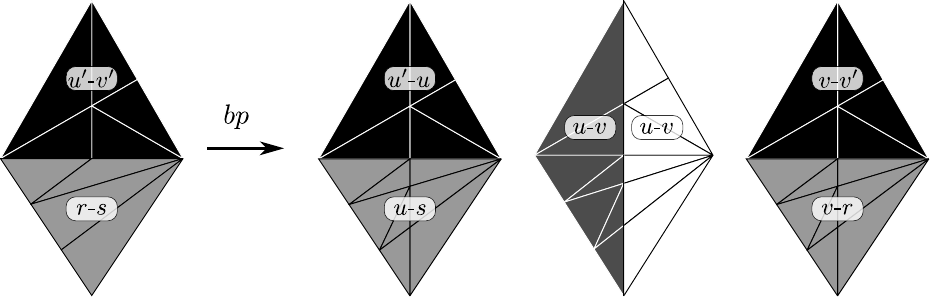}
\caption{\sf A $bp$-move with the balloon's tail of type $B_3$.}
\label{fig:pillowother3}
\end{center}
\end{figure}

In what follows given $X \in \{ P_{2k-1}', B_{2k-1}' \}$ denote by 
$\overline{X}$ the copy of $X$ which is a PL2-face of the PL-tetrahedra 
whose PL2$_3$-face is below the similar PL2$_3$-face of the other PL-tetrahedra 
which completes the pillow in focus. 
In face of these conventions, if balloon's tail is of type
\begin{itemize}
 \item $P_{2k-1}$, then by applying a $bp$-move, we get types
$P_{2k-1}', \widehat{P}_{2k-1}', B_{2k+1}, R_{2k-1}^b $
 \item $P_{2k-1}'$, then by applying a $bp$-move, we get types 
$P_{2k-1}, B_{2k+1}, R_{2k-1}^b$ 
 \item $B_{2k-1}$, then by applying a $bp$-move, we get types
$B_{2k-1}',  \widehat{B}_{2k-1}', P_{2k+1}, R_{2k-1}^p$
 \item $B_{2k-1}'$, then by applying a $bp$-move, we get types 
$B_{2k-1}, P_{2k+1}, R_{2k-1}^p$ 
\end{itemize}
The necessary increasing in the ranks of the types of faces shows that the rank of 
each face is at least the one obtained. It may cause a surprise the fact that these
ranks are enough to make the PL-embedding geometric into $\mathbb{R}^3$.

\end{proof}

It is worthwhile to mention, in view of the above proof, 
that each PL2-face is refined at most one time. 
So, if $X$ is a type of PL2-face, $X'$ is its refinement, then
$X''=X'$. This idempotency is a crucial property inhibiting 
the exponentiality of our algorithm and enables a quadratic bound.

\subsubsection{Quadratic bounds on the number of simplices}

\begin{corollary}
\label{theo:thequadratic}
The quadratic expressions $$3n^2-5n+9,~~~ 11n^2-17n+21,~~~ 8n^2-10n+12$$
 are upper bounds for the numbers of 0-simplices, 1-simplices and 2-simplices
of the colored 2-complex $\mathcal{H}_{n}^\star$ induced by a resoluble
gem $G$ with $2n$ vertices.
\end{corollary}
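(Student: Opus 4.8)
The plan is to obtain the three quadratic bounds by a bookkeeping argument over the sequence
$\mathcal{H}^\star_1 \mapright{bp}{} \mathcal{H}^\star_2 \mapright{bp}{} \cdots \mapright{bp}{} \mathcal{H}^\star_n$,
counting the increments in the numbers of $0$-, $1$- and $2$-simplices contributed by each $bp$-move and summing. First I would tally the simplex counts of the base case $\mathcal{H}^\star_1$ (the dual of the bloboid $\mathcal{B}_n$) directly from the explicit description in Fig. \ref{fig:dualPL2}: the $2n$ vertices $z_3^j$, the four fixed $0$-simplices $z_0,z_1,z_2$ together with $a_1,b_1$, and the $\overline{i}$-residue $0$-simplices, and similarly for the stacked $2$-simplices (five per level plus the fixed $PL2$-faces at the bottom) and their edges. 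This gives linear-in-$n$ starting values, which I would record as explicit small polynomials in $n$.

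Next I would bound the cost of a single $bp$-move. By Proposition \ref{prop:allkinds} every $PL2$-face occurring anywhere in the sequence is one of the finitely many types $\{G, P_{2k-1}, P_{2k-1}', B_{2k-1}, B_{2k-1}', R_{2k-1}^b, R_{2k-1}^p\}$, and by the transition table in the proof of that proposition a $bp$-move applied to a balloon's tail of rank $2k-1$ produces faces of rank at most $2k+1$; moreover the idempotency remark ($X''=X'$) guarantees each face is refined at most once. Since the rank of any face that ever appears is bounded by a linear function of its position in the sequence — at step $m$ the relevant tail has rank $O(m)$ — the number of new simplices created at step $m$ is $O(m)$, with an explicit constant read off from how a balloon of tail-rank $r$ turns into a pillow (two refined copies of the tail of rank $\le r+2$, one new $PL2_0$ or $PL2_1$ face of rank $\le r+2$, one new $PL2_2$ face $R^b$ or $R^p$, plus the $O(1)$ new $0$-simplices $b_{q'},b_{q''}$ and the relabelled ones). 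Summing an arithmetic-progression-type bound $\sum_{m=1}^{n-1} (\alpha m + \beta)$ against the linear base case yields a quadratic in $n$; I would then check that the leading coefficients and lower-order terms, after choosing the worst-case branch of the transition table at each step, are dominated by $3n^2-5n+9$, $11n^2-17n+21$ and $8n^2-10n+12$ respectively. The inequality direction (upper bound, not equality) is convenient here, since it lets me always charge the worst branch.

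The main obstacle I anticipate is the careful accounting of shared simplices: when a $bp$-move refines a $PL2$-face, the new $1$- and $0$-simplices along the refining arc from the midpoint of $z_2z_1$ to $z_3^\dagger$ are shared between the two copies making up the new pillow and with neighboring faces, so naive summation of per-face sizes would overcount. I would handle this by counting increments globally — how many genuinely new $0$-, $1$-, $2$-simplices appear in $\mathcal{H}^\star_{m+1}$ that were not in $\mathcal{H}^\star_m$ — rather than re-summing face sizes from scratch, using the fact that a $bp$-move is local (it touches one balloon and its tail, plus the $O(1)$ bookkeeping $0$-simplices) and that the maximal height $j$ involved, hence the length of any refining arc, is at most $2n$. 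A secondary nuisance is the special case noted in Fig. \ref{fig:winglist03} where one shaded triangle lies in the outer face; but as remarked there the combinatorial (rotation) behaviour of the move is unchanged, so it contributes the same increment and does not affect the count. Once the per-step increment is pinned down as an explicit affine function of $m$ (taking the maximum over the four tail-type branches), the three claimed bounds follow by summing a finite arithmetic series and comparing coefficients, which is the routine calculation I would not grind through here.
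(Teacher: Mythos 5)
Your proposal follows essentially the same route as the paper's own proof: a linear count for the base complex $\mathcal{H}^\star_1$ ($2n+5$ zero-simplices), a worst-case per-step increment that is affine in the step index because the balloon's tail rank grows by at most $2$ per $bp$-move (with the idempotency $X''=X'$ preventing repeated refinement), and a summation of the resulting arithmetic progression to get the quadratic bounds. The only difference is that you defer the explicit constants (the paper pins down the increment at step $k$ as $6(k-1)+2$ for $0$-simplices and sums to $3k^2-k$), which is exactly the routine calculation you describe.
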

\begin{proof} 
We prove the first bound, on 0-simplices; the other are similar: in the worse case,
the increase of simplices is a linear function on the rank of the current PL2-face,
and to get the final number we sum an arithmetic progression.

We detail the strategy for 0-simplexes. Note that $\mathcal{H}_{1}^\star$ has exactly 
$z_0, z_1, z_2, a_1, b_1$ and $z_3^j, j\in \{1,\ldots,2n\}$ as 0-simplices, which is $2n+5$
0-simplices. In the first step, the balloon's tail has to be of type $P_1$ or $B_1$, 
so by applying a $bp$-move, we get two new 0-simplices.
In second step, the worst case is when the balloon's 
tail is of type $P_3$ or $B_3$, generated by last $bp$-move, 
so we add $6\times1+2=8$ to the number of 0-simplices in the upper bound.
In step $k$ we note that the worst case is when we use the greatest ranked PL2-face generated by 
last $bp$-move, therefore the balloon's tail has to be of type $P_{2k-1}$ or $B_{2k-1}$
and we add $6\cdot(k-1)+2$ 0-simplices. 
By adding the number of 0-simplices created by $bp$-moves from step 1 until step $k$ 
we get  $3k^2-k$ 0-simplices. Since the number of steps is $n-1$, 
and we have at the beginning $2n+5$ 0-simplices, we have that $3n^2-5n+9$ is 
an upper bound for the number of 0-simplices. 
\end{proof}

\subsubsection{Detailing the combinatorics of the wings, nervures and struts}

An embedding of a graph into an oriented surface is combinatorially encoded
as a {\em rotation at the vertices} or simply a {\em rotation}. A rotation is the set of cyclic orderings
of the edges around the vertices (induced by the surface) so that each edge appears exactly twice, 
one with each orientation. We encode the struts as rotations. Only the last one needs to be
dealt with geometrically.

At some point in our research it became evident that what was 
needed to obtain the embedded PL-complex $\mathcal{H}_n^\star$ 
was a proper embedding into $\mathbb{R}^3$ of two special sequences 
of 0-simplices $\mathcal{A}=(a'_1, a'_2,\ldots,a'_f)$ and $\mathcal{B}=(b'_1, b'_2,\ldots,b'_g)$,
where $f+g=2n$. Each $a'_i \in\{a_i,A_i\}$ and each $b'_i \in\{b_i,B_i\}$. 
This terminology for the 0-simplices is obtained recursively
and detailed shortly in this section. For now we just say that all 
other 0-simplices not in $\mathcal{A}\cup\mathcal{B}$ of the colored complexes $\mathcal{H}^\star_m$
are obtained by bisections of segments linking previously defined points. 
It came as a surprise to discover
that the apparently difficult 3D problem of 
positioning $\mathcal{A}\cup\mathcal{B}$ could be 
reformulated as a plane problem with an easy solution,
via a linear unique solution algorithm. That is the role of the wings, nervures and
struts associated to the colored complexes.

\begin{figure}[H]
\begin{center}
\includegraphics[width=15.4cm]{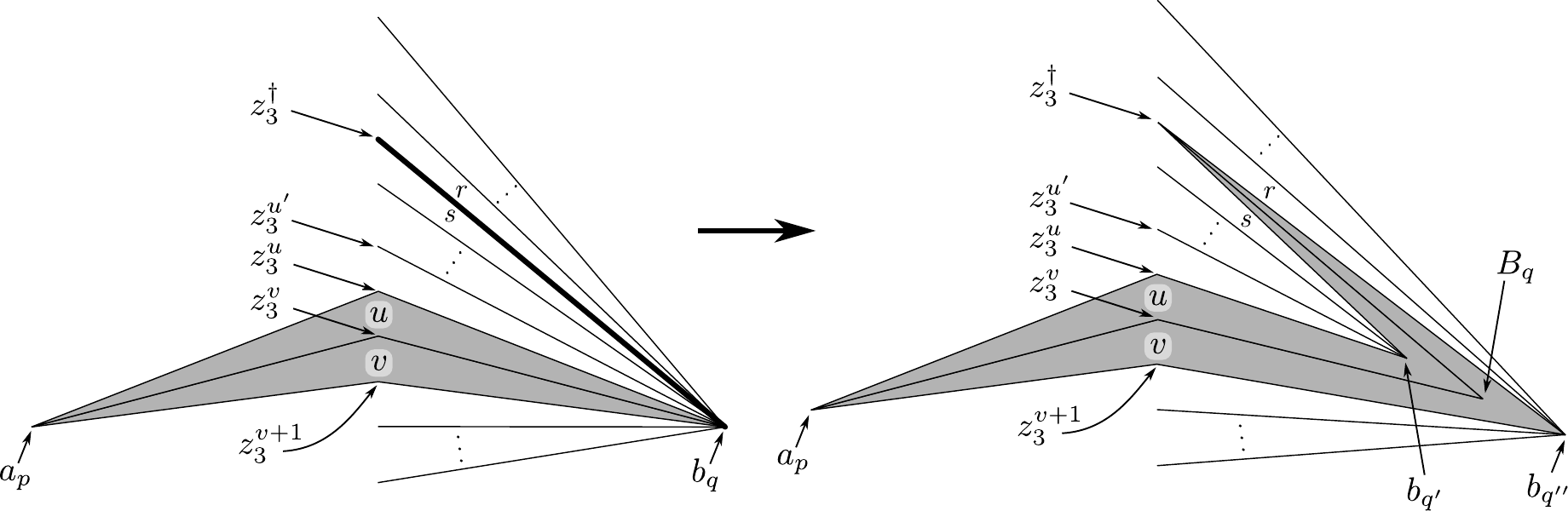}
\caption{\sf Generic $wbp$-move: balloon's head section is painted in gray, 
and the part of balloon's tail that is intersecting the appropriate
semi-plane $(\Pi'')$ is depicted as a {\em thick edge}. \index{thick edge}}
\label{fig:controlmaps01}
\end{center}
\end{figure}

\begin{figure}[H] 
\begin{center}
\includegraphics[width=15 cm]{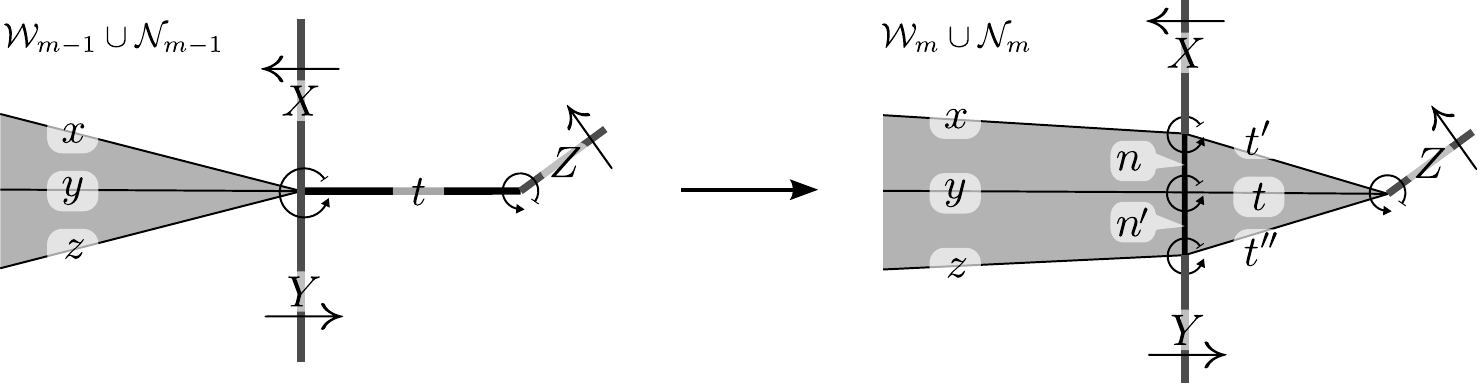}
\caption{\sf The effect of the general $wbp$-move
in the combinatorial strut: the {\em star} \index{star} of a vertex 
of a graph embedded in an orientable
surface (in our case the plane) is the counterclockwise cyclic sequence of 
edges incident to the vertex (such an ordering is induced by the surface).
The set of stars is called a { \em rotation} \index{rotation} and has the 
characterizing property that each edge appears twice.
The general case of changing rotation when going from 
$\mathcal{W}_{m-1} \cup \mathcal{N}_{m-1}$ to
$\mathcal{W}_{m} \cup \mathcal{N}_{m}$ is depicted above: 
labels $X,Y,Z$ stand for arbitrary (maybe empty) sequences of edges;
the vertex $XxyzYt$ breaks into three $Xxnt'$, $nyn't$ and $n'zYt''$ and
the vertex $Zt$ changes into $Zt'tt''$. Two new vertices and four new edges are created. 
Two of these edges ($n$ and $n'$) are in the nervure $\mathcal{N}_{m}$ and the other two ($t'$ and $t''$) are in the wing $\mathcal{W}_{m}$.
The new rotation completely specifies the topological 
embedding of $\mathcal{W}_{m} \cup \mathcal{N}_{m}$.}
\label{fig:controlmaps01novo2}
\end{center}
\end{figure}

\begin{algorithm}[Obtaining the rotations for the struts, arriving to $\mathcal{S}_n$]
{\sf 
BEGIN: $i=1$; $\mathcal{W}_i=\mathcal{W}'_i \cup \mathcal{W}''_i$ is formed by 
$2n$-edges linking $a_1$ to $Z$ and $2n$ edges linking 
$b_1$ to $Z$; we also have $\mathcal{N}_1=\varnothing$; REPEAT: $i \leftarrow i+1$; 
there are two cases, according to the color of the edges 
which are flipped in the associated thinning of a blob is 1 or 0; 
in the first case the strut that 
changes is the left one 
(while $\mathcal{S}_{i}''=\mathcal{S}_{i-1}''$), 
in the other case it is the right strut that changes
(while $\mathcal{S}_{i}'=\mathcal{S}_{i-1}'$);
perform the appropriate $wbp$-move creating two new vertices $a_{2\ell}, a_{2\ell+1}$ 
in the first case or $b_{2\ell}, b_{2 \ell+1}$ in the second ($2\ell-1$ is the last index
of $\mathcal{A}$ or of $\mathcal{B}$); let $a_q$ (resp. $b_q$) denote the 13-gon (resp. the 03-gon)
which is being subdivided; then we relabel $a_q$ as $A_q$ (resp. $b_q$ as $B_q$); these capital
letter labeled 0-simplices no longer correspond to bigons, while bigon $a_q$ (resp $b_q$) has
been subdivided into $a_{2\ell}, a_{2\ell+1}$ (resp. $b_{2\ell}, b_{2 \ell+1}$); 
also 2 new edges are added when going from $\mathcal{S}_{m-1}$ to $\mathcal{S}_{m}$ according to
the rotation changing of Fig. \ref{fig:controlmaps01novo2}; 
UNTIL $i=n$; Output $\mathcal{S}_n$; END.
}
\end{algorithm}

It is easy to verify that the above algorithm is linear in $n$.
Now we need to make $\mathcal{S}_n'$ rectilinearly embedded into $\Pi'$
and $\mathcal{S}_n''$ rectilinearly embedded into $\Pi''$. 
This is provided in the next subsection.

\subsection{Third phase: a rectilinear PL-embedding for 
$\mathcal{S}_n$ 
and its induced diamond complex $\mathcal{H}_1^\diamond$, by a cone construction}
\label{subsec:diamondW}

A graph is {\em rectilinearly embedded into a plane} if 
the images of their edges are straight
line segments. We will find by a linear algorithm a rectilinear 
embedding for $\mathcal{S}_n$. We do it in the plane and lift it to space
by two simple rotations to position $\mathcal{S}'_n$ in $\Pi'$ and 
$\mathcal{S}''_n$ in $\Pi''$.

\begin{lemma}\label{lem:numberofedges}
 The number of edges of $\mathcal{S}_n$ is $8n-4$.
\end{lemma}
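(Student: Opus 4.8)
The plan is to count edges by tracking how the recursive $wbp$-move construction of $\mathcal{S}_n$ changes the edge count, starting from the explicit description of $\mathcal{S}_1$. First I would establish the base case: by definition $\mathcal{W}_1 = \mathcal{W}'_1 \cup \mathcal{W}''_1$ consists of $2n$ edges emanating from $a_1$ (to the vertices of the auxiliary frame $Z$) together with $2n$ edges emanating from $b_1$, while $\mathcal{N}_1 = \varnothing$; hence $\mathcal{S}_1$ has $4n$ edges. Then I would use the fact established in the algorithm and in Fig. \ref{fig:controlmaps01novo2} that each of the $n-1$ successive $wbp$-moves creates exactly two new vertices and exactly four new edges (two nervure edges $n, n'$ and two wing edges $t', t''$) while deleting none — the move splits a vertex $XxyzYt$ into three vertices and relabels $Zt$, re-routing existing edge-ends but not removing edges. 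Therefore the number of edges of $\mathcal{S}_m$ satisfies $|E(\mathcal{S}_m)| = |E(\mathcal{S}_{m-1})| + 4$.

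Carrying out the induction: with $|E(\mathcal{S}_1)| = 4n$ and $n-1$ moves each adding $4$ edges, we get $|E(\mathcal{S}_n)| = 4n + 4(n-1) = 8n - 4$, which is the claimed value. For safety I would cross-check this against the companion count in Lemma \ref{lem:numberofedges}'s siblings if available, or simply against the quadratic bounds corollary; but the clean linear bookkeeping here is self-contained.

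The main obstacle — really the only subtle point — is making precise that a $wbp$-move genuinely adds four and only four edges and destroys none, i.e. that the rotation surgery in Fig. \ref{fig:controlmaps01novo2} is purely additive on the edge set. One must check that the arbitrary sequences $X, Y, Z$ of incident edges are merely redistributed among the three vertices $Xxnt'$, $nyn't$, $n'zYt''$ replacing the old vertex $XxyzYt$, and that the vertex $Zt$ becomes $Zt'tt''$ with its edge-set augmented only by $t', t''$; the four genuinely new edge-symbols are $n, n', t', t''$. Once this local invariant is confirmed, the global count follows immediately, and one should also remark that it is independent of which case (left strut changing via color $1$, or right strut changing via color $0$) occurs at a given step, since both cases have the identical rotation-change pattern.
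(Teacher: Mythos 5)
Your proposal is correct and follows essentially the same argument as the paper: $\mathcal{S}_1$ has $4n$ edges and each of the $n-1$ $wbp$-moves adds exactly four edges (two to the nervure, two to the wing), giving $4n+4(n-1)=8n-4$. Your additional care in verifying that the rotation surgery of Fig.~\ref{fig:controlmaps01novo2} is purely additive on the edge set is a welcome elaboration of the paper's terser statement, but it is not a different method.
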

\begin{proof}
The number of edges of $\mathcal{S}_1$ is $4n$. At each of the $n-1$ $bp$-moves
we add 4 new edges.
\end{proof}

We present an algorithm to obtain a rectilinear embedding in the plane $xz$ of
$\mathcal{S}_n$. The algorithm is subdivided into two sub-algorithms:  
the first one
produces the $x$-coordinates of the vertices of $\mathcal{N}_n$ 
and the second one produce the $z$-coordinates
of the vertices of $\mathcal{S}_n$.

\begin{figure}[H] 
\begin{center}
\includegraphics[width=15.5cm]{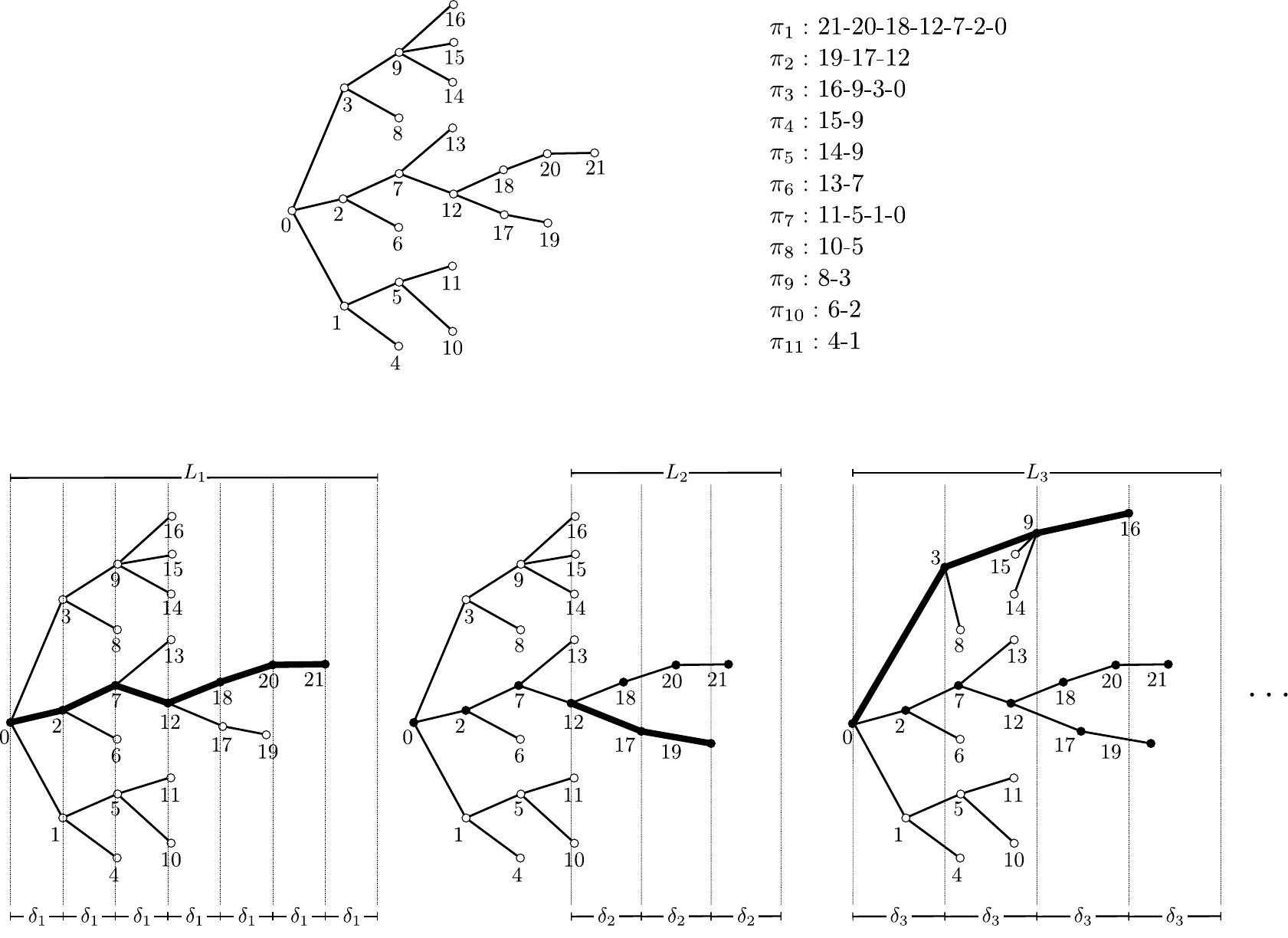}
\caption{\sf A linear sub-algorithm to find the $x$-coordinates of the vertices of the left nervure
$\mathcal{N}'$ (the right nervure $\mathcal{N}''$ is similar). BEGIN: 
find the BFS-number (\cite{cormen2001introduction}) of vertices of the tree $\mathcal{N}'$
with root vertex being the leftmost one of $\mathcal{N}'$. Let the vertices be labeled by its BFS-number.
In the algorithm, we use a partition of the edges of $\mathcal{N}'$
into paths; make all vertices except the root unused; $i=0$; 
make the path partition empty; REPEAT: $i \leftarrow i+1$; take the ancestor path $\pi_i$ 
starting with the highest BFS-numbered vertex not yet used and finishing at the 
first used vertex; put the sequence 
of vertices of $\pi_i$ defining a new member of the edge-path partition;
declare all the vertices in $\pi_i$ as used; 
let $\delta_i$ be $L_i/\lambda_i$, where $L_i$ is the $x$-distance from the first and last 
vertices of $\pi_i$ and $\lambda_i$ is the number of vertices in $\pi_i$; let $\delta_i$ be 
the $x$-distance  between consecutive vertices of $\pi_i$ 
(note that the $x$-coordinate of the last 
vertex of $\pi_i$ has been already defined and so the $x$-coordinates 
of the all the vertices of $\pi_i$ becomes fixed, never to change); 
UNTIL all vertices are used; END. 
}
\label{fig:coordX}
\end{center}
\end{figure}

\begin{figure}[H] 
\begin{center}
\includegraphics[width=14cm]{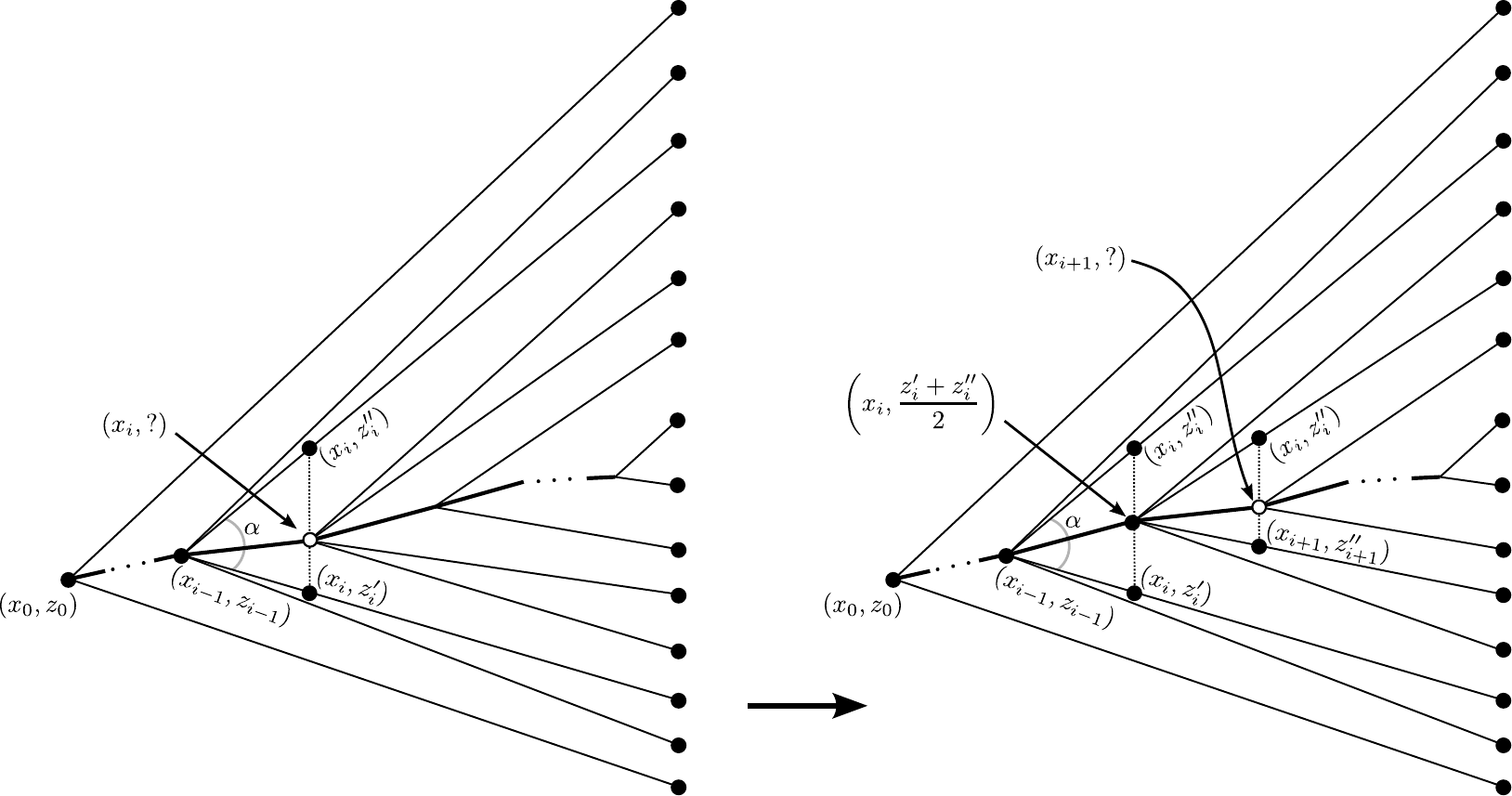}
\caption{\sf A linear sub-algorithm to find the $z$-coordinates of the vertices
for a rectilinear embedding of $\mathcal{W}'$
($\mathcal{W}''$ is similar). BEGIN:
by using the Algorithm of Fig. \ref{fig:coordX} we have already the $x$-coordinates
of all the vertices of $\mathcal{W}'$. Let $\gamma_1, \gamma_2, 
\ldots \gamma_k$ be the sequence of inverses of the 
paths $\pi_i$'s, obtained in the algorithm of Fig. \ref{fig:coordX}; 
let $z_1^1$ be the $z$-coordinate of the first vertex of $\gamma_1$; 
$z_1^1 \leftarrow 0$; $i \leftarrow 0$; REPEAT: $i \leftarrow i+1$;  $j \leftarrow 0$; 
REPEAT: $j \leftarrow j+1$; let $e_{ij}$ be the edge in the 
nervure incident to $\gamma_i^{j}$ (the $j$-th vertex in the path $\gamma_i$)
and $\gamma_i^{j-1}$; 
let $e_{ij}'$ be the edge which succeeds $e_{ij}$ and $e_{ij}''$ 
be the edge that precedes it in the counterclockwise 
rotation of vertex $\gamma_i^{j-1}$; note that the other ends of 
$e_{ij}'$ and $e_{ij}''$ are $z_3^p$ and $z_3^q$ for some $p<q$;
(Obs: in the case of $\gamma_1^1$ it might happen that $e_{11}''$ does not exist; 
in this case define $e_{11}''$ as a virtual edge that links $\gamma_1^1$ to $z_3^{2n}$, 
where $2n$ is the number of vertices of the $J^2$-gem);
let $v$ be the intersection of the line $x=x_i^j$ and the edge $e_{ij}'$;
let $w$ be the intersection of the line $x=x_i^j$ and the edge $e_{ij}''$;
$z_i^j \leftarrow (v+w)/2$; UNTIL $j$ = length of $\gamma_i$; UNTIL $i = k$; 
END.
}
\label{fig:drawLasWing}
\end{center}
\end{figure}

We apply the algorithms of the previous two figures to obtain a rectilinear
embedding of $\mathcal{W}$ in linear time. The connection between 
 $\mathcal{W}$ ,  $\mathcal{W}_1^\diamond$  
 and  $\mathcal{H}_1^\diamond$ is explained in Figure \ref{fig:Wdiamond1}.

\begin{figure}[H]
\begin{center}
\includegraphics[width=14.5cm]{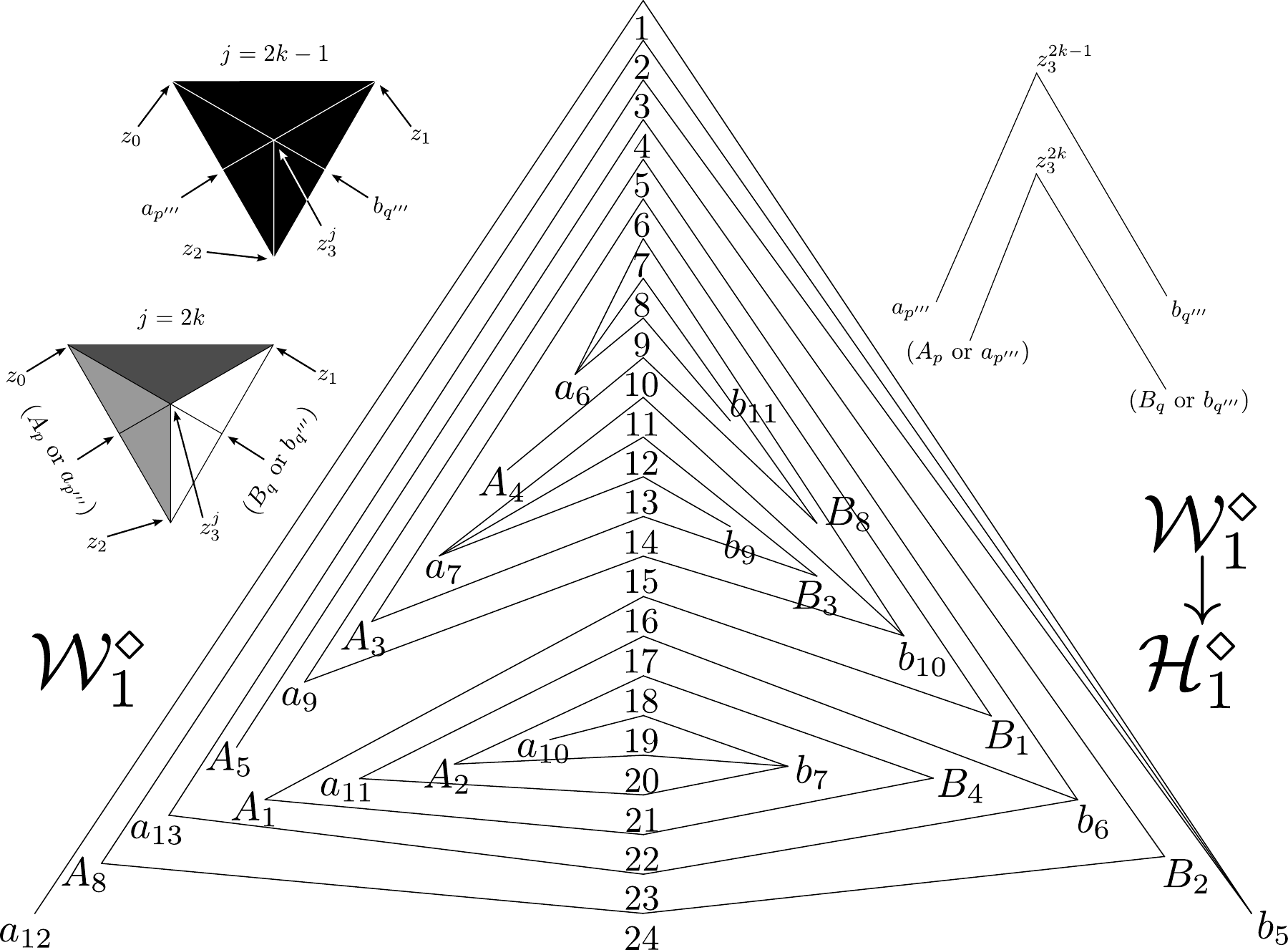} \\
\caption{\sf To get $\mathcal{W}^\diamond_1=\mathcal{W}^{\diamond'}_1 
\cup \mathcal{W}^{\diamond''}_1$ from $\mathcal{W}$ remove
all but two straight line segments emanating from  $z_3^k$, one in each side.
The two segments that survive are the ones 
finishing at the smallest index upper case $A_p$ (when there is a choice)
and the smallest indexed upper case $B_q$ (when there is a choice).
Let $\{x\} \cup Y \subseteq \mathbb{R}^N$, for $1\le N \in \mathbb{N}$. 
The \index{cone} {\em cone} \cite{rourke1982introduction} 
with vertex $x$ and base $Y$, denoted $x \ast Y \subseteq \mathbb{R}^N$,
is the union of $Y$ with all line 
segments which link $x$ to $y \in Y$.
The passage 
$\mathcal{W}_1^ \diamond \rightarrow \mathcal{H}_1^\diamond$ is 
straightforward by a cone algorithm:
for each $e'\in \mathcal{W}_1^{\diamond'}$ add the two 2-simplices $z_0\ast e'$ and $z_2\ast e'$
to $\mathcal{H}^\diamond_1$;
for each $e''\in \mathcal{W}_1^{\diamond''}$ add the two 2-simplices $z_1\ast e''$ and $z_2\ast e''$
to $\mathcal{H}^\diamond_1.$ 
To complete $\mathcal{H}^\diamond_1$ add the 2-simplices $\{z_3^jz_1z_0 ~|~ j=1, \ldots, 2n \}.$
}
\label{fig:Wdiamond1}
\end{center}
\end{figure}

\subsection{Fourth phase: filling the pillows, 
to obtain the PL-embedding of $\mathcal{H}^\star$ we seek:
$\mathcal{H}^\diamond_1, \mathcal{H}^\diamond_2,\ldots$  
$\mathcal{H}^\diamond_{n-1}, \mathcal{H}^\diamond_n= \mathcal{H}^\star$}

We start the fourth phase with $\mathcal{W}^\diamond_1$ and $\mathcal{H}^\diamond_1$, which is defined in
Fig. \ref{fig:Wdiamond1}.

\begin{proposition}
\label{cor:embedcone}
If $\mathcal{W}_1^\diamond$ is embedded rectilinearly in $\Pi'\cup \Pi''$,
then it can be 
extended to an embedding of $\mathcal{H}^\diamond_1$ into $\mathbb{R}^3$,
via the cone construction.
\end{proposition}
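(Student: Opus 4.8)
The plan is to exhibit the extension of the rectilinear embedding of $\mathcal{W}_1^\diamond$ to an embedding of $\mathcal{H}_1^\diamond$ explicitly, following the cone recipe given in Fig.~\ref{fig:Wdiamond1}, and to verify that no two of the added $2$-simplices intersect improperly. Recall the geometric set-up: $z_0, z_1, z_2$ are the vertices of an equilateral triangle of side $1$ in the $xy$-plane centered at the origin, the points $z_3^j = (0,0,2n-j)$ lie on the positive $z$-axis above that triangle, and $\mathcal{W}_1^{\diamond'}$ (resp.\ $\mathcal{W}_1^{\diamond''}$) is a rectilinearly embedded graph inside the half-plane $\Pi'$ (resp.\ $\Pi''$), where $\Pi'$ contains the edge $z_0 z_2$ and the axis, and $\Pi''$ contains the edge $z_1 z_2$ and the axis. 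Each $\mathcal{W}_1^{\diamond'}$ is (by its construction in the third phase) a planar forest whose leaves are among the $z_3^j$ and whose internal vertices are the $A_p$'s and bisection points, all lying in $\Pi'$ strictly on one side. The cone construction adds, for each edge $e' \in \mathcal{W}_1^{\diamond'}$, the triangles $z_0 \ast e'$ and $z_2 \ast e'$; symmetrically for $\Pi''$ with apexes $z_1, z_2$; and finally the $2n$ triangles $z_3^j z_1 z_0$.

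First I would fix the ambient convexity that makes everything work. The key observation is that $\Pi'$ and $\Pi''$ meet exactly along the $z$-axis $L$, and the whole embedded figure $\mathcal{W}_1^{\diamond'}$ sits in $\Pi'$ on the side of $L$ facing $z_0$ (away from $z_1$), while $\mathcal{W}_1^{\diamond''}$ sits in $\Pi''$ on the side facing $z_1$. Consequently the two ``fans'' $z_0 \ast \mathcal{W}_1^{\diamond'} \cup z_2 \ast \mathcal{W}_1^{\diamond'}$ and $z_1 \ast \mathcal{W}_1^{\diamond''} \cup z_2 \ast \mathcal{W}_1^{\diamond''}$ live in the two closed half-spaces bounded by the plane $\Sigma$ through $L$ and $z_2$, and their interiors avoid $\Sigma$ except where they must meet along the shared spine. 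I would check that $\Sigma \cap (z_0\ast e') = z_2\ast(\text{a sub-segment of }e'\cap L\cup\{\text{the }z_3\text{-leaf}\})$, i.e.\ the two fans meet only in a subcomplex of the common triangles $z_3^j z_1 z_0$ and their subfaces, so gluing them along that subcomplex introduces no new crossings. This reduces the problem to checking embeddedness of a single cone fan $z_a \ast \mathcal{W}_1^{\diamond'}$ ($z_a \in \{z_0,z_2\}$) together with the final $2n$ triangles.

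For a single fan, embeddedness follows from the standard fact that the cone $x \ast Y$ on an embedded complex $Y$ is embedded provided $x$ sees every point of $Y$ without obstruction --- precisely, provided the segments $\overline{xy}$, $y\in Y$, pairwise meet only at $x$ or along a common initial segment. Since $Y = \mathcal{W}_1^{\diamond'}$ is a planar straight-line graph in $\Pi'$ and the apex $z_a$ lies in $\Pi'$ as well, the entire fan is planar, inside $\Pi'$; one only has to verify that $z_a$ is not separated from any edge of $\mathcal{W}_1^{\diamond'}$ by another edge. Here I would invoke the specific geometry produced by the sub-algorithms of Figs.~\ref{fig:coordX} and \ref{fig:drawLasWing}: the $z$-coordinate of each internal vertex is chosen as the midpoint of the two bounding edges $e_{ij}'$, $e_{ij}''$ at the fixed $x$-line, which is exactly the device that keeps the forest ``visible'' from the base points on the axis --- no edge of $\mathcal{W}_1^{\diamond'}$ crosses the line of sight from $z_a$ because all vertices lie in the wedge between consecutive rays $z_a z_3^p$. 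Thus each fan is an embedded disk, the two fans over $\Pi'$ glue along $z_2 \ast (L \cap \mathcal{W}_1^{\diamond'})$ into an embedded disk, likewise over $\Pi''$, and the two resulting disks together with $\{z_3^j z_1 z_0\}$ close up along $L$ and the triangle $z_0z_1z_2$ into the embedded $2$-complex $\mathcal{H}_1^\diamond$. Finally I would remark that the three ``outer'' triangles $z_0z_1z_2$ and the boundary faces $z_0z_2(\text{top leaf})$, $z_1z_2(\text{top leaf})$ form the boundary sphere, so that filling with cones on $z_3^k$ --- as is done starting $\mathcal{H}_1^\diamond$ in the next phase --- is consistent.

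The main obstacle I expect is precisely the visibility/non-crossing claim for a single fan: one must be sure that the rectilinear embedding of $\mathcal{W}_1^{\diamond'}$ produced by the third-phase algorithm places every internal vertex strictly inside the planar wedge spanned at $z_a$ by the two extreme rays to the surviving $z_3$-leaves, with no edge ``doubling back'' across a line of sight. This is not automatic for an arbitrary planar straight-line drawing; it is a property we get for free from the particular midpoint rule in Fig.~\ref{fig:drawLasWing} (each new vertex is placed between the two edges it will separate) together with the fact that $\mathcal{W}_1^{\diamond}$ retains only one outgoing segment per $z_3^k$ on each side. Once that monotone-visibility property is stated and checked, the rest of the argument is the routine cone-complex bookkeeping sketched above.
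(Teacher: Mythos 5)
The paper itself offers no argument here: its entire proof is ``Straightforward from the simple geometry of the situation. See Fig.~\ref{fig:Wdiamond1}.'' So your attempt is necessarily doing more than the paper does, and your overall skeleton --- split the added $2$-simplices into the four cone families with apexes $z_0,z_2$ over $\Pi'$ and $z_1,z_2$ over $\Pi''$, use the separating plane $\Sigma$ through the axis and $z_2$ to isolate the two sides, and then certify each family separately --- is a sensible way to make the claim precise.

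However, there is a concrete error in your geometric setup. The half-plane $\Pi'$ is bounded by the $z$-axis and contains $a_1=\frac{z_0+z_2}{2}$; it does \emph{not} contain $z_0$ or $z_2$ (a plane through the $z$-axis and the centroid of $z_0z_1z_2$ cannot contain two vertices of that triangle --- indeed the plane of $\Pi'$ is the perpendicular bisector plane of the segment $z_0z_2$). So the fan $z_a\ast\mathcal{W}_1^{\diamond'}$ is not planar, and the in-plane ``visibility'' condition you identify as the main obstacle is not the relevant issue. The good news is that this makes the per-fan step easier, not harder: if the apex $x$ lies off the plane $P$ containing the base, then for $y_1\neq y_2\in P$ the segments $\overline{xy_1}$ and $\overline{xy_2}$ meet only at $x$ (compare signed distances to $P$), hence $x\ast e'\cap x\ast f'=x\ast(e'\cap f')$ and each cone over the embedded straight-line graph $\mathcal{W}_1^{\diamond'}$ is automatically an embedded simplicial complex, with no condition on the drawing produced in the third phase. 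The price of misplacing the difficulty is that the genuinely delicate checks are left unverified: (i) the two fans $z_0\ast\mathcal{W}_1^{\diamond'}$ and $z_2\ast\mathcal{W}_1^{\diamond'}$ lie in opposite half-spaces of the plane of $\Pi'$ and hence meet only in $\mathcal{W}_1^{\diamond'}$; (ii) the $z_2$-cones from the two sides meet $\Sigma$ in the segments $z_2\ast(e'\cap L)$, i.e.\ in the $1$-simplices $z_2z_3^j$ (duals of the $01$-gons), not ``in a subcomplex of the triangles $z_3^jz_1z_0$'' as you assert; and (iii) the $2n$ triangles $z_3^jz_1z_0$, which straddle $\Sigma$ and share the edges $z_0z_3^j$ and $z_1z_3^j$ with the cones, must be shown to meet the cone families only in those common faces --- this last point is not covered by any of your separating-plane arguments and is the one place where the actual coordinates of Fig.~\ref{fig:dualPL2} and the location of $\mathcal{W}_1^{\diamond}$ inside the stack of tetrahedra must be used. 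With (i)--(iii) supplied, your plan closes the gap the paper leaves open.
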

\begin{proof}
 Straightforward from the simple geometry of the situation. See Fig. \ref{fig:Wdiamond1}. 
\end{proof}

Let $\mathcal{L}_{i+1}^\star$ be a subset of the pillow $\mathcal{P}_{i+1}^\star$, formed by the part
that comes from the tail of the balloon after the i-th $bp$-move is applied, see Fig. \ref{fig:U3}.

\begin{figure}[H]
\begin{center}
\includegraphics[width=13cm]{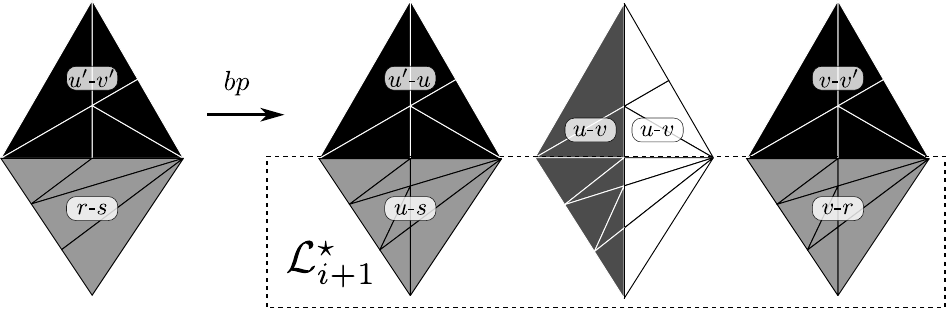}
\caption{\sf The set $\mathcal{L}_{i+1}^\star$'s: at each step of 
Theorem \ref{theo:teoremadeumalinha} we embed a set 
 $\mathcal{L}_{i+1}^\star$, $i=0,\ldots n-1$ of 2-simplices.
 These sets are the complementary parts of the PL2-faces 
 already in $\mathcal{H}^\diamond_1$ after a change of colors
 in the medial layer. 
The process of replacing the combinatorial tail of a balloon 
by the corresponding
trio of embedded PL2-faces in the pillow is denominated 
\index{blow up a tail} {\em the blowing up of the 
balloon's tail}.
 } 
\label{fig:U3}
\end{center}
\end{figure}

\begin{theorem}\label{theo:teoremadeumalinha}
 There is an $O(n)$-algorithm for blowing up a single balloon's tail.
Thus finding $\mathcal{H}_n^\star$ take, $O(n^2)$ steps.
\end{theorem}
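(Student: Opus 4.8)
The plan is to describe one ``blowing up of a balloon's tail'' step as a fully local, linear-time geometric procedure and then compose $n-1$ of them. I would proceed by induction on $i$, the invariant being that after the $i$-th blow up we have a rectilinear PL-embedding of $\mathcal{H}_{i+1}^\diamond$ into $\mathbb{R}^3$ whose PL2-faces are pairwise interior-disjoint spheres (so that, by capping with the balloon heads and the PL3-face balls, it realizes $\mathcal{H}_{i+1}^\star$ minus one tetrahedron), together with the rectilinearly embedded struts $\mathcal{S}_{i+1}$ sitting in the two fixed half-planes $\Pi'\cup\Pi''$. Proposition~\ref{cor:embedcone} gives the base case $i=0$: the cone construction of Fig.~\ref{fig:Wdiamond1} turns the rectilinearly embedded $\mathcal{W}_1^\diamond$ into an embedding of $\mathcal{H}_1^\diamond$.

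First I would isolate the combinatorial input of the $i$-th $bp$-move: a balloon consisting of a head (dual of a blob) and a tail (a PL2-face of one of the types $P_{2k-1},P_{2k-1}',B_{2k-1},B_{2k-1}'$ of Proposition~\ref{prop:allkinds}), together with the thick edge in which the tail meets the appropriate half-plane $\Pi''$ (or $\Pi'$), as recorded in the strut. By Proposition~\ref{prop:allkinds} and the idempotency remark ($X''=X'$), the tail is refined at most once, so its rank after the move is bounded and known; hence the set $\mathcal{L}_{i+1}^\star$ of new 2-simplices to be placed (Fig.~\ref{fig:U3}) has size $O(k)=O(n)$. Next, I would give the geometric recipe: make the (at most two) copies of the tail PL2-face, place their new $0$-simplices by \emph{bisecting segments between already-embedded $0$-simplices} — exactly the $\mathcal{A}\cup\mathcal{B}$-plus-midpoints description from Subsection~\ref{subsec:diamondW} — route the medial layer of the resulting pillow using the $z_3^\dagger$ vertex at the prescribed height $j$, and recolor the medial layer as dictated by the current $J^2B$-gem (step (3) of Fig.~\ref{fig:pillowother2NOVO}). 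Each such placement is a constant-size local operation, and there are $O(n)$ of them, so a single blow up is $O(n)$.

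The main obstacle I expect is verifying that the newly placed 2-simplices are genuinely embedded and interior-disjoint from everything already present — i.e. that the pillow we insert does not self-intersect and does not collide with other PL2-faces. This is exactly the point where the rank bookkeeping of Proposition~\ref{prop:allkinds} does its real work: the guaranteed level of refinement is precisely what leaves enough ``room'' (the subdivision of $z_2z_1$ toward $z_3^\dagger$) to slide the medial layer of the pillow into place without collapsing any 2-simplex. I would argue this by appealing to the planar strut $\mathcal{S}_{i+1}$: the wing $\mathcal{W}_{i+1}$ is, by construction, the section of the would-be $\mathcal{H}_{i+1}^\star$ by $\Pi'\cup\Pi''$, so the rectilinear planar embedding produced by the algorithms of Figs.~\ref{fig:coordX}--\ref{fig:drawLasWing} certifies that the cross-section stays embedded; since the whole configuration is obtained from the planar picture by the cone construction over the fixed apexes $z_0,z_1,z_2$ and the stack of apexes $z_3^j$, embeddedness in the plane lifts to embeddedness in $\mathbb{R}^3$. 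The $wbp$-move changes the rotation of $\mathcal{S}$ exactly as in Fig.~\ref{fig:controlmaps01novo2} (two new vertices, four new edges, planarity preserved), which is what keeps the induction going.

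\textbf{Conclusion.} Composing the $n-1$ blow ups, each costing $O(n)$ time and adding $O(n)$ simplices, yields $\mathcal{H}_n^\diamond=\mathcal{H}^\star$ in $O(n^2)$ time, and by Corollary~\ref{theo:thequadratic} the total number of simplices is $O(n^2)$ as well; finally, re-attaching the one removed PL3-face via the inverse stereographic projection (as in the strategy subsection) gives the sought PL-embedding of $\mathcal{H}^\star$ into $\mathbb{S}^3$, which by Proposition~\ref{prop:j2shere} is the correct ambient manifold.
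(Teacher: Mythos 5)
Your high-level plan (induct on the $bp$-moves, insert each $\mathcal{L}_{i+1}^\star$ by bisecting segments between already-placed $0$-simplices, observe that each step adds $O(n)$ simplices and costs $O(n)$ time, then compose $n-1$ steps) is the same skeleton as the paper's argument, and the complexity count is fine. But there is a genuine gap at exactly the point you flag as ``the main obstacle'': you do not actually verify that the inserted pillow is embedded and interior-disjoint from what is already there, and the justification you offer does not work. The pillow-filling steps are \emph{not} cone constructions over the struts: the new $0$-simplices $\alpha_j,\gamma_j$ live in the interiors of the neighbouring PL3-faces $\nabla_r$ and $\nabla_s$, which must be shrunk ($\epsilon$-changed) to make room, and the struts only control the positions of the $\mathcal{A}\cup\mathcal{B}$ points, not these interior points. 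So ``embeddedness in the plane lifts to embeddedness in $\mathbb{R}^3$'' covers $\mathcal{H}_1^\diamond$ (Proposition \ref{cor:embedcone}) but not the inductive step.

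The paper's proof consists precisely of the explicit placements that close this gap, organized as a case analysis on the tail type: for unrefined tails one takes $\beta_j=\frac{z_2+\chi_{j+1}}{2}$, $\zeta_j=\frac{z_2+\omega_{j+1}}{2}$ and $\alpha_j=\frac{\zeta_j+\beta_j}{2}$; for refined tails ($P_i'$, $B_i'$) the two sides are \emph{not} symmetric ($r\neq s-1$), and when the adjacent PL2$_0$-face of $\nabla_r$ is itself refined the naive midpoint recipe fails --- the paper states explicitly that ``if we define $\alpha_j$ as if the PL2$_0$-face were not refined, some 1-simplices may cross,'' and introduces the \emph{bump} (a non-convex pentagon, $\nu_j=\frac{z_2+\omega_j}{2}$, $\alpha_j=\frac{\beta_{j-1}+\nu_j}{2}$) to avoid the crossing. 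Your uniform ``bisect between already-embedded points'' recipe is exactly the one that breaks in that case, so the proposal as written would produce a non-embedding for some inputs. To repair it you would need to spell out the case analysis (or some equivalent collision-avoidance argument for the interiors of $\nabla_r,\nabla_s$), which is the actual content of the theorem's proof.
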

\begin{proof}
$\mathcal{H}_{i+1}^\diamond$ is the union of $\mathcal{H}_{i}^\diamond$ with $\mathcal{L}_{i+1}^\star$
and an {\em $\epsilon$-change} in some PL3-faces, if the rank of the type of balloon's tail of the i-th $bp$-move has rank greater than 1
(we call $\epsilon$-change because this change is small, as described below).
At the same time we update the colors of the middle layer to match the colors of the $i$-th pillow in the sequence of $bp$-moves.


Now we describe how to embed each kind of $\mathcal{L}_{i}^\star$ 
(explaining how to $\epsilon$-change some PL3-faces,
to get space for $E\mathcal{L}_i^\star$).

If the balloon's tail is of type $P_1$ (the case $B_1$ is analogous).
Make two copies of $P_1$, resulting in three $P_1$, but change the color of the one which will be in
the middle, and define the 0-simplices like in Fig. \ref{fig:3d2}.

\begin{figure}[!htb] 
\begin{center}
\includegraphics[width=14cm]{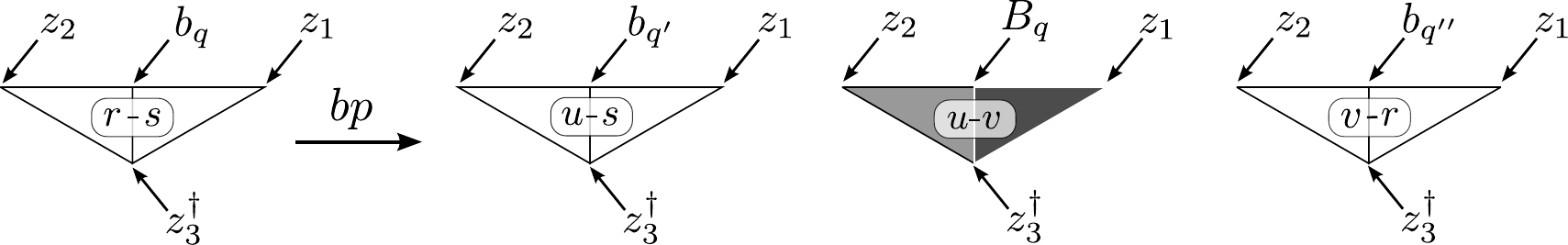}
\caption{\sf Embedding the part of the pillow corresponding to the tail of the balloon: case $P_1$ of the tail.}
\label{fig:3d2}
\end{center}
\end{figure}
If the balloon's tail is of type $B_i$, $i>1$ (the case $P_i$ is analogous).
Make two copies of $B_i$, refine the copies and the original, resulting in three $B_i'$, 
but change the color of the one which will be in
the middle, and define the 0-simplices like in Fig. \ref{fig:3d3}.

The images $\chi_j$ we already know from previous $bp$-move, now we need to define all the images
$\alpha_j, \beta_j$ and $\gamma_j.$
Let $\beta_j$ be $\frac{z_2+\chi_{j+1}}{2}$ 
for each $j=1,\ldots, i$.
As the images $\alpha_j$ and $\gamma_j$ can be defined in analogs way, we just explain how to define each $\alpha_j$.
We know that each $\alpha_j$ is in the PL3-face $\nabla_r$. To 
define each $\alpha_j$ we need 
to reduce the PL3-face $\nabla_r$
in order to get enough space for the PL2-faces of color 0 and 2 of the PL3-faces $\nabla_u$ and $\nabla_v$.
Consider the PL3-face $\nabla_r$, each $\beta_j$ is already defined, so 
define each $\zeta_j$ as $\frac{z_2+\omega_{j+1}}{2}$, where $\omega_k$ is previously defined, 
 see Fig. \ref{fig:nextdual3}. Define $\alpha_j$ as $\frac{\zeta_j+\beta_j}{2}$. 

\begin{figure}[!htb] 
\begin{center}
\includegraphics[width=14cm]{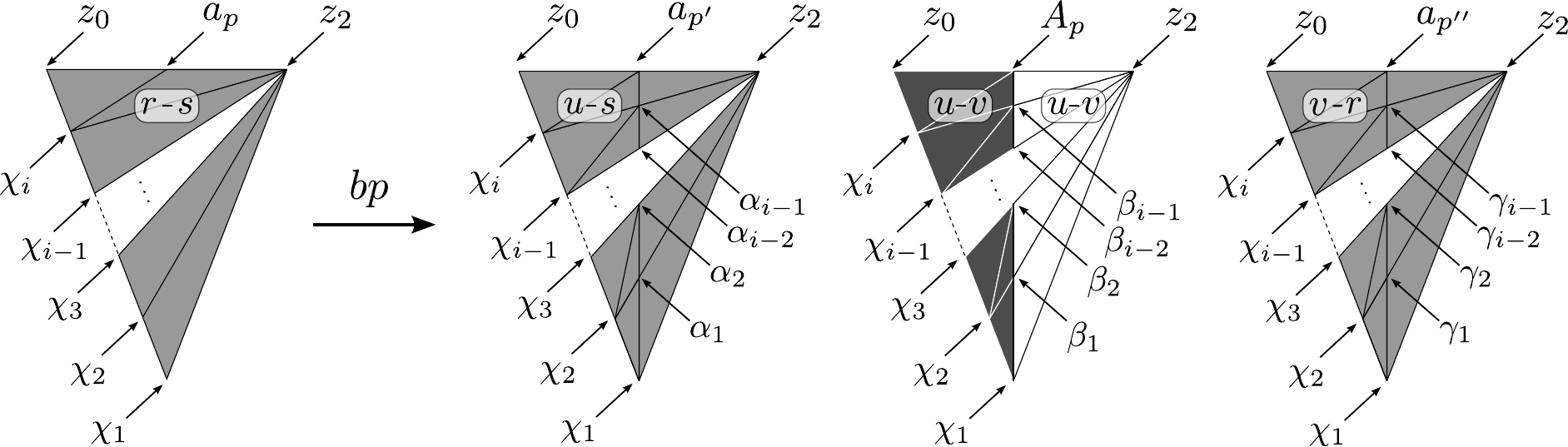}
\caption{\sf Embedding the part of the pillow corresponding to the tail of the balloon: case $B_i$ of the tail.}
\label{fig:3d3}
\end{center}
\end{figure}

\begin{figure}[!htb] 
\begin{center}
\includegraphics[scale=0.8]{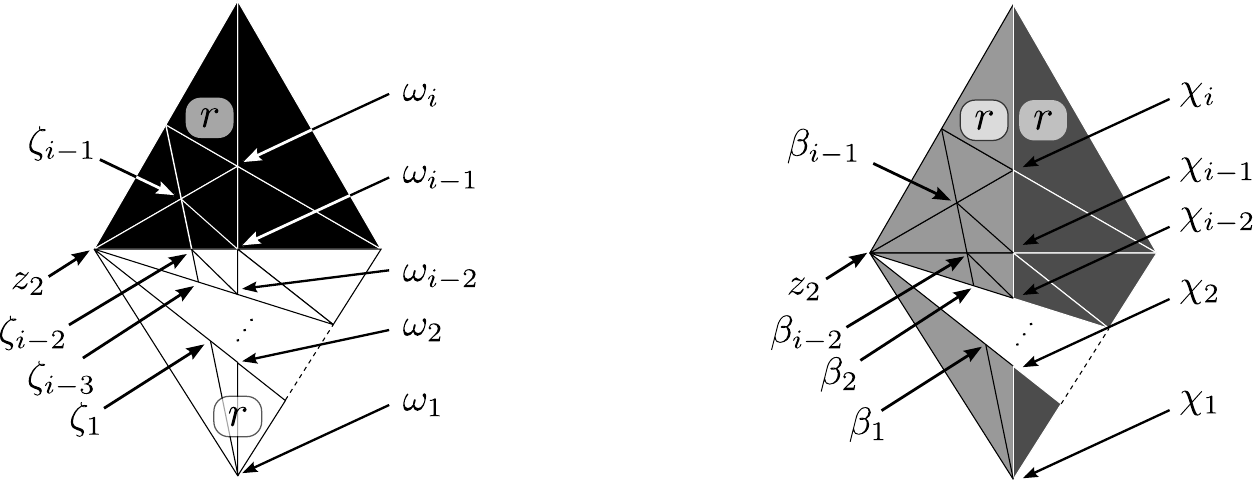}
\caption{\sf Using the PL3-face corresponding to $r$ to define the $\alpha_j$ as $\frac{\zeta_j+\beta_j}{2}$.}
\label{fig:nextdual3}
\end{center}
\end{figure}

The last case is when balloon's tail is refined, that means it is of type $P_i'$ or $B_i'$, $i>1$. We treat the case $B_i'$,
 see Fig. \ref{fig:3d4}. All the 0-simplices $\beta_j$ are already defined, we need to define each
$\alpha_j$ and each $\gamma_j$. Observe that here $r\neq s-1$ and the definitions of $\alpha_j$ and $\gamma_j$ 
are not analogous.

\begin{figure}[!htb] 
\begin{center}
\includegraphics[width=15cm]{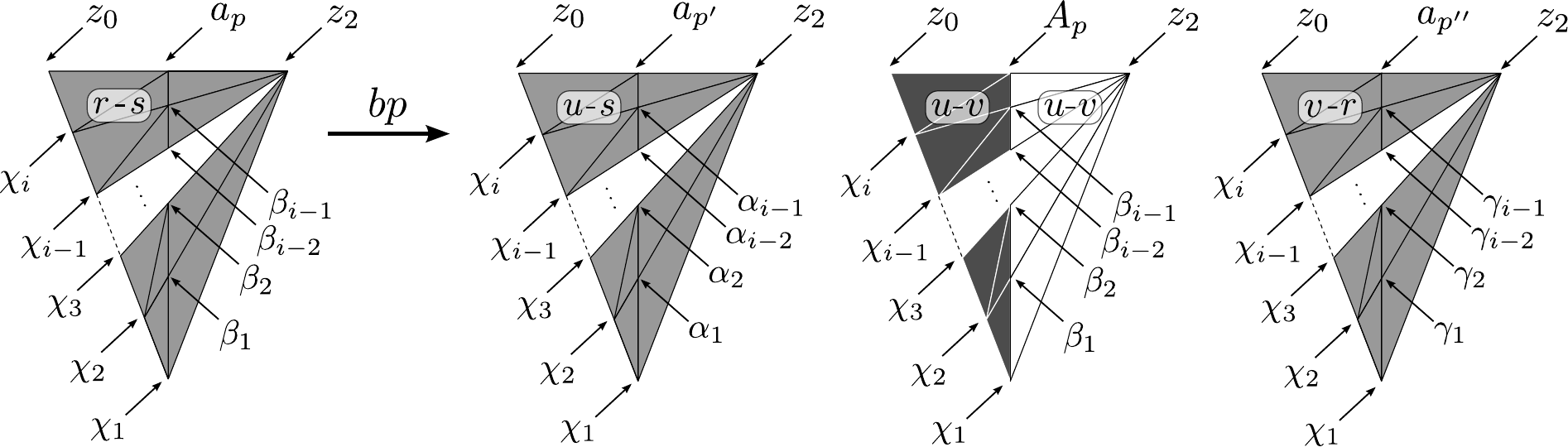}
\caption{\sf Embedding the part of the pillow corresponding to the tail of the balloon: case $B_i'$ of the tail.}
\label{fig:3d4}
\end{center}
\end{figure}

In this case, we need to reduce the PL3-faces $\nabla_r$ and $\nabla_s$
to create enough space to build PL2-faces 0- and 2-colored.
To define 0-simplices $\alpha_j$ and $\gamma_j$, one of these cases is analogous to the case not
 refined, but the other we describe here. ($\nabla_r$ is in the new case is the rank 
 of PL2$_0$-face is equals to the rank of the PL2$_1$-face plus 2, if its not true,
the new case is in the PL3-face $\nabla_v$).
Suppose that the new case is in the PL3-face, $\nabla_r$. To define $\alpha_j$,
suppose that the PL2$_0$-face of this PL3-face is not refined,
 see Fig. \ref{fig:3d5}. Define each $\alpha_j$  as the middle point between $\beta_j$ and $\omega_j$.

\begin{figure}[!htb] 
\begin{center}
\includegraphics[scale=0.7]{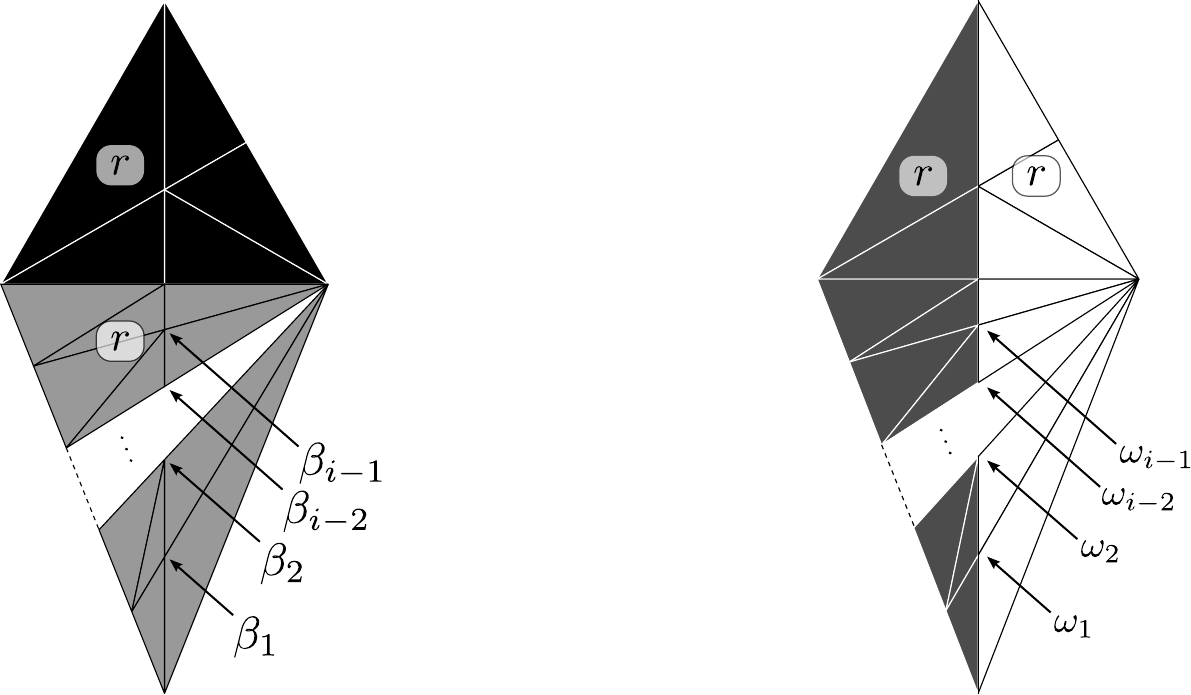}
\caption{\sf Using the PL3-face $\nabla_r$ to define $\alpha_j$ as $\frac{\omega_j+\beta_j}{2}$.}
\label{fig:3d5}
\end{center}
\end{figure}

Consider the case that the PL2$_0$-face, of the PL3-face $\nabla_r$, is refined
see Fig. \ref{fig:3d6}.
 This is a final subtlety which 
is treated with the {\em bump}. \index{bump} This is characterized by a
non-convex pentagon shown in the bottom part of Fig. \ref{fig:3d6}.
Let $\nu_j$ be $\frac{z_2+\omega_j}{2}$
and $\alpha_j$ as $\frac{\beta_{j-1}+\nu_j}{2}$, for $j=1,\ldots, i-1$. Observe that
if we define $\alpha_j$ as if the PL2$_0$-face
where not refined, some 1-simplices may cross.

\begin{figure}[!htb] 
\begin{center}
\includegraphics[scale=0.7]{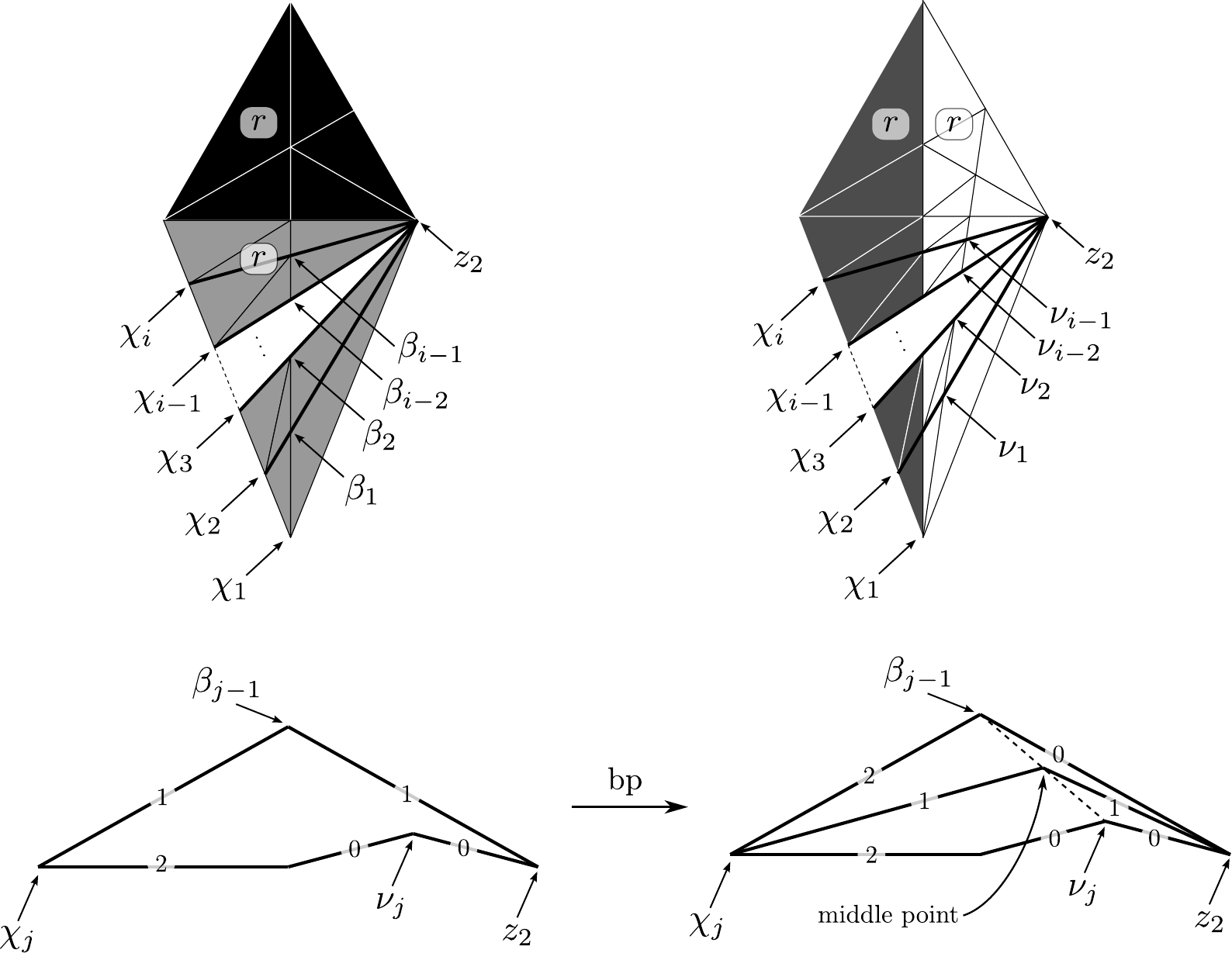}
\caption{\sf The bump: a final subtlety and how to deal with it.}
\label{fig:3d6}
\end{center}
\end{figure}

\end{proof}

\bibliographystyle{plain}
\bibliography{bibtexIndex.bib}

\vspace{10mm}
\begin{center}

\hspace{7mm}
\begin{tabular}{l}
   S\'ostenes L. Lins\\
   Centro de Inform\'atica, UFPE \\
   Av. Jornalista Aníbal Fernandes s/n \\
   Recife--PE 50740-560\\
   Brazil\\
   sostenes@cin.ufpe.br
\end{tabular}
\hspace{15mm}
\begin{tabular}{l}
   Ricardo N. Machado\\
   Núcleo de Formação de Docentes, UFPE\\
   Av. Jornalista Aníbal Fernandes s/n \\
   Caruaru--PE \\
   Brazil\\
   ricardonmachado@gmail.com
\end{tabular}

\end{center}

\end{document}